\documentclass[11pt]{article}

\usepackage[a4paper,margin=1in]{geometry}
\usepackage{amsmath,amssymb,amsthm,mathtools,bm}
\usepackage[shortlabels]{enumitem}
\usepackage{microtype}
\usepackage{xcolor}
\usepackage{mathrsfs} 
\numberwithin{equation}{section}

\usepackage{authblk}

\usepackage[numbers,sort&compress]{natbib}
\theoremstyle{plain}
\newtheorem{theorem}{Theorem}[section]
\newtheorem{proposition}{Proposition}[section]
\newtheorem{corollary}{Corollary}[section]

\theoremstyle{definition}
\newtheorem{definition}{Definition}[section]
\newtheorem{assumption}{Assumption}[section]

\theoremstyle{remark}
\newtheorem{remark}{Remark}[section]

\newcommand{\R}{\mathbb{R}}
\newcommand{\N}{\mathbb{N}}

\newcommand{\ip}[2]{\left\langle #1, #2\right\rangle}
\newcommand{\ceq}{:=}
\newcommand{\matrixnorm}[1]{\left| #1 \right|}
\DeclareMathOperator{\linspan}{span}
\DeclareMathOperator{\dom}{dom}
\DeclareMathOperator{\supp}{supp}
\newcommand{\dualu}{u}
\newcommand{\dualv}{v}
\newcommand{\momsupport}{\dom f}
\DeclareMathOperator{\Id}{Id}

\usepackage[colorlinks]{hyperref}
\hypersetup{linkcolor=blue, citecolor=blue, urlcolor=blue}
\usepackage[nameinlink,capitalize]{cleveref}

\crefformat{equation}{(#2#1#3)}
\crefrangeformat{equation}{(#3#1#4) to~(#5#2#6)}
\crefmultiformat{equation}{(#2#1#3)}{ and~(#2#1#3)}{, (#2#1#3)}{ and~(#2#1#3)}

\newcommand{\ba}{a}
\newcommand{\bb}{b}

\newcommand{\bp}{p}

\newcommand{\br}{r}

\newcommand{\bx}{\mathbf{x}}
\newcommand{\by}{y}
\newcommand{\bz}{z}
\newcommand{\np}{N}
\newcommand{\fp}{f_{\rm mom}}
\newcommand{\fd}{f_{\rm sos}}

\newcommand{\fpc}{f_{pc}}

\newcommand{\id}{I} 
\DeclareMathOperator{\tr}{tr}

\DeclareMathOperator{\rank}{rank}

\begin{document}

	\title{Polyconvexity with Moments and Sums of Squares}
	\author[1]{Giovanni Fantuzzi}
    \author[2,3]{Didier Henrion}
    \author[6,7]{Martin Kru\v z\'{\i}k}
    \author[1]{Ajay Murali}
    \author[3]{Stephan Weis}
    
    \affil[1]{Dept. Mathematics, Friedrich-Alexander-Universit\"at Erlangen-N\"urnberg, Germany}
    \affil[2]{LAAS-CNRS, University of Toulouse, France}
    \affil[3]{Faculty of Electrical Engineering, Czech Technical University in Prague, Czechia}
    \affil[6]{Institute of Information Theory and Automation, Czech Academy of Sciences, Czechia}
    \affil[7]{Faculty of Civil Engineering, Czech Technical University in Prague, Czechia}

	\date{\today}
	\maketitle

	\begin{abstract}
    A function of a matrix is polyconvex when it can be expressed as a convex function of the matrix minors. Polyconvexity is a regularity condition ensuring existence of minimizers in nonlinear elasticity and, more broadly, in vectorial problems of the calculus of variations, when minimizing integral gradient functionals. The polyconvex envelope of a function is the largest polyconvex lower bound. Yet deciding whether a given energy is polyconvex, or computing the polyconvex envelope, are generally difficult problems. This paper focuses on polynomial matrix functions. We propose (i) tractable convex-optimization based sufficient conditions to certify polyconvexity via sum-of-squares (SOS) technology, and (ii) a principled numerical method to compute the polyconvex envelope pointwise, based on the moment-SOS hierarchy from polynomial optimization.
	\end{abstract}

\section{Introduction}

Many problems in continuum mechanics, especially nonlinear elasticity, can be modelled through the minimization of integral functionals such as
\begin{equation*}
	I(\by)=\int_\Omega f(\nabla \by(\bx))\, d\bx
\end{equation*}
over functions $\by$ in a Sobolev space $W^{1,p}(\Omega;\R^m)$, $1<p<+\infty$, subject to  prescribed boundary conditions \cite{Pedregal2000,Rindler2018,dacorogna08,ciarlet88,kruzik-roubicek19}. Here, $\Omega\subset\R^n$ is a bounded Lipschitz domain and $f:\R^{m\times n}\to \R \cup \{+\infty\}$ is a given function.

The existence of optimizers relies on the properties of $f$. In particular, it is well-known \cite{dacorogna08,Pedregal2000,Rindler2018} that minimizers exist in $W^{1,p}(\Omega;\R^m)$ if $f$ satisfies suitable growth and coercivity conditions and, crucially, is \emph{quasiconvex} in the sense of Morrey \cite{Morrey1952}. This condition is equivalent to the weak lower semicontinuity of the functional $I$. Should quasiconvexity fail, one can replace $f$ with its quasiconvex envelope (that is, the largest quasiconvex pointwise lower bound on $f$) to obtain a well-posed minimization problem with the same minimum value as the original one \cite{dacorogna08,roubicek20,Rindler2018}. However, quasiconvexity is an inherently non-local property in general \cite{Kristensen1999}, so it is usually difficult to verify. For this reason, sufficient conditions for the lower semicontinuity of the functional $I$ that are easier to manipulate remain of great interest.

In this work, we focus on the notion of \emph{polyconvexity}. A function $f$ of a matrix $X$ is polyconvex if it can be written as a convex function of $X$ and its minors. This local condition implies quasiconvexity when $f$ is finite (see, e.g., \cite[Theorem~5.3]{dacorogna08}) and ensures the weak lower semicontinuity of the functional $I(y)$ even if $f$ is not finite \cite{ball76}. Polyconvex functions are thus extremely popular in material modelling and nonlinear elasticity, where singularities arise naturally as a consequence of incompressibility constraints. They are also a key ingredient in relaxation-based numerical methods for integral minimization problems describing microstructures (see, e.g., \cite{Bartels2004,Bartels2006,bk11}) because polyconvex envelopes (rather than quasiconvex ones) can be approximated numerically~\cite{b05,ebg13,beg15,mppw24}.  Nevertheless, two fundamental difficulties remain.

First, verifying that a function $f(X)$ is polyconvex by finding an explicit convex representation in terms of the minors of $X$ requires considerable ingenuity. The same is true if one uses an alternative characterization of polyconvexity given in \cite[Theorem~5.6]{dacorogna08} (see also Proposition~\ref{prop:first-order-polyconvex} below), since this requires verifying that an inequality resembling the classical first-order characterization of convexity holds for all matrices $X,Y \in \R^{m\times n}$.

Second, existing numerical methods for computing polyconvex envelopes of non-polyconvex functions face scalability challenges. This is because they essentially grid the matrix space $\R^{m\times n}$ to implement Dacorogna's formula \cite[Theorem 6.8]{dacorogna08}, which expresses the polyconvex envelope at a matrix $X$ as a convex combination of function values at matrices $X_1,\ldots,X_N$ whose minors combine convexly to recover the minors of $X$. Finding these matrices and the convex combination weights is a nonconvex optimization problem that must be solved to \emph{global} optimality, which remains prohibitively expensive unless $m=n=2$ \cite{b05,ebg13,beg15} or the function being polyconvexified is isotropic \cite{mppw24}.

\subsection{Contributions and outline}
In this work, we develop systematic and computationally efficient methods to verify polyconvexity and compute polyconvex envelopes for functions $f:\R^{m\times n} \to \R \cup \{+\infty\}$ that are polynomial on their domain.

In Section~\ref{sec:polyconvexity} we leverage sum-of-squares (SOS) certificates of polynomial nonnegativity to define two computationally tractable strengthenings of polyconvexity for polynomials. The first, called \emph{lifted SOS polyconvexity}, enforces that $f$ be an SOS-convex polynomials of the minors of the input matrix. The second, called \emph{SOS polyconvexity}, replaces the inequality in the first-order characterization of polyconvexity from \cite[Theorem~5.6]{dacorogna08} with an SOS constraint. Both strengthenings are computationally tractable because SOS and SOS-convex polynomials can be recognized by solving semidefinite programs (SDPs), see for example \cite{Choietal1995,Lasserre2001,Parrilo2003,Nesterov2000,Laurent2009,Blekherman2013,lasserre15,nie23,Theobald2024}. We demonstrate their usefulness by giving new elementary proofs that the Alibert–Dacorogna–Marcellini function $\matrixnorm{X}^2 (\matrixnorm{X}^2 - 2 \det X)$ and the double-well function $\matrixnorm{X-I}^2 \matrixnorm{X+I}^2$ are polyconvex (Section~\ref{ss:sos-pc-examples}). Finally, in Section~\ref{ss:gaps} we investigate the gaps between polyconvexity, SOS polyconvexity and lifted SOS polyconvexity, giving examples of polynomials that are either polyconvex or SOS polyconvex but not lifted SOS polyconvex.

In Section~\ref{sec:envelope}, we exploit computational tools for polynomial optimization to develop an efficient algorithmic approach to computing polyconvex envelopes. The key observation, recalled in section~\ref{sec:linear}, is that Dacorogna's polyconvexification formula admits a reformulation as a \emph{linear} minimization problem over probability measures in $\R^{m\times n}$, whose dual is a maximization problem with a polynomial inequality constraint (see Section~\ref{sec:duality}). Section~\ref{sec:SDP-approx} then explains how tractable SDP approximations for this pair of infinite-dimensional problems can be formulated using the moment--SOS hierarchy from polynomial optimization. Each SDP in the hierarchy returns a lower bound on the value of the polyconvex envelope, and we discuss sufficient conditions to either guarantee convergence \emph{a priori} or verify it \emph{a posteriori}. Section \ref{sec:numerics} collects several numerical examples illustrating the efficiency of this approach, including a double-well function with dimensions $m=n=3$ whose polyconvex envelope at a given matrix $X$ can be evaluated  in a fraction of a second on a laptop with no need to exploit isotropy.

Finally, let us stress that the restriction to functions that are polynomial on their effective domains is not strong. Indeed, the techniques we describe can be generalized with straightforward modifications to the much broader class of \emph{semialgebraic functions}, that is, functions whose graph can be defined using a set of polynomial equations and inequalities. Examples include trigonometric functions and polynomials of matrix norms. We do not describe this generalization for brevity, but our methods remain broadly applicable.

\section{Sum-of-squares polyconvexity}\label{sec:polyconvexity}

In this section, we introduce and investigate computationally tractable sufficient conditions for the polyconvexity of polynomials, based on the notion of sum-of-squares (SOS) polynomials. We briefly review these notions next.

Throughout the paper, given positive integers $n,d$, we write $\R[x]_d$ for the space of polynomials of degree up to $d$ in the entries of $x \in \R^n$. This space has dimension $\smash{\binom{n+d}{d}}$. Inner products in an inner product space are always indicated using the angled brackets $\langle \cdot, \cdot \rangle$. In particular, given matrices $X$ and $Y$ of the same size, $\ip{X}{Y} := \tr(X^\top Y)$ is the Frobenius inner product and  $\matrixnorm{X}:=\sqrt{\ip{X}{X}}$ is the corresponding norm.

\subsection{SOS polynomials and SOS-convexity}

Let us start by recalling the definition of an SOS polynomial.

\begin{definition}[SOS polynomial]
	An even-degree polynomial $f \in \R[x]_{2d}$ is a \emph{sum of squares (SOS)} if there exist polynomials $g_1,\ldots,g_k \in \R[x]_d$ such that $f = g_1^2 + \cdots + g_k^2$.
\end{definition}
It is clear that SOS polynomials are globally nonnegative. While the converse is true only if $n=1$ or $d=2$ or $(n,d)=(2,4)$ \cite{Hilbert1888}, SOS polynomials are useful for practical computations involving nonnegative polynomials because they can be represented using positive semidefinite matrices. The link comes from the following statement, first observed in \cite{Choietal1995}. See \cite{Lasserre2001,Parrilo2003,Nesterov2000,Laurent2009,Blekherman2013,lasserre15} and more recently \cite{nie23,Theobald2024} and references therein.

\begin{proposition}
	Let $b:\R^n \to \smash{\R^{\binom{n+d}{d}}}$ be a polynomial vector whose entries span $\R[x]_d$. Then, $f \in \R[x]_{2d}$ is SOS if and only if there exists a positive semidefinite matrix $Q$ such that $f(x) = \ip{b(x)}{Q b(x)}$.
\end{proposition}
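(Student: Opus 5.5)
The plan is to prove both implications directly, with linear algebra as the only tool. The key fact is that the entries $b_1,\ldots,b_N$ of $b$, where $N=\binom{n+d}{d}=\dim \R[x]_d$, span $\R[x]_d$. Since there are exactly $N$ of them, they form a basis. Consequently every $g \in \R[x]_d$ can be written uniquely as $g(x) = \ip{c}{b(x)}$ for a coefficient vector $c \in \R^N$.

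For the \emph{only if} direction, suppose $f = g_1^2+\cdots+g_k^2$ with each $g_i \in \R[x]_d$. Write $g_i = \ip{c_i}{b}$ and set $C$ to be the $N\times k$ matrix with columns $c_1,\ldots,c_k$. Then
\begin{equation*}
f(x) = \sum_{i=1}^k \ip{c_i}{b(x)}^2 = \sum_{i=1}^k \ip{b(x)}{c_i c_i^\top b(x)} = \ip{b(x)}{CC^\top b(x)}.
\end{equation*}
The matrix $Q := CC^\top$ is positive semidefinite, which gives the required representation.

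For the \emph{if} direction, suppose $f(x) = \ip{b(x)}{Qb(x)}$ with $Q$ positive semidefinite. Factor $Q = CC^\top$, for instance via the spectral decomposition $Q = \sum_i \lambda_i q_i q_i^\top$ with all $\lambda_i \ge 0$, taking $c_i = \sqrt{\lambda_i}\, q_i$. Then $f = \sum_i \ip{c_i}{b}^2$. Each $\ip{c_i}{b}$ is a linear combination of the entries of $b$, so it lies in $\R[x]_d$, and therefore $f$ is SOS.

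Neither direction has a real obstacle. The only points that need care are the following.
\begin{itemize}
\item The basis (spanning) property of $b$ is used only in the first direction, to express each $g_i$ in terms of $b$. The second direction needs just that the entries of $b$ lie in $\R[x]_d$.
\item The degree bookkeeping is consistent: squares of elements of $\R[x]_d$ lie in $\R[x]_{2d}$, which matches the definition of SOS.
\end{itemize}
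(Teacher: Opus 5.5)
Your proof is correct. The paper gives no argument of its own and simply cites \cite{Choietal1995}, and your Gram-matrix reasoning is exactly the standard proof behind that citation: expand each $g_i$ in the basis $b$ to get $Q = CC^\top$, and conversely factor a positive semidefinite $Q$ as $CC^\top$.
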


This result allows one to check if a given polynomial is SOS by solving a semidefinite program (SDP), a type of convex optimization problem for which mature algorithms and software exist. We refer interested readers to \cite{Wolkowicz2000,BenTal2001,Anjos2012} for more details.

SOS polynomials allow one to define a computationally tractable sufficient condition for the convexity of a polynomial, leading to the notion of SOS convexity. It was introduced originally in \cite{helton-nie10} to study semidefinite representations of convex sets.
SOS convexity is stronger than convexity, even though it is challenging to find polynomials that are convex but not SOS convex, see \cite{AhmadiParrilo2013} and \cite[Sec. 7.1.1]{nie23} for a recent survey.

\begin{definition}[SOS convexity]
	An $n$-variate polynomial $f(x)$ is \emph{SOS convex} if the $2n$-variate polynomial $\ip{y}{\nabla^2 f(x) y}$ is SOS in the variables $(x,y)$.
\end{definition}

An equivalent characterization of SOS convexity replaces the SOS condition involving the Hessian of $f$ with an SOS condition on its Bregman divergence,
\begin{equation}
	D_f(x,y)\ceq f(x)-f(y)- \ip{\nabla f(y)}{x-y}.
\end{equation}

\begin{proposition}[{\cite[Theorem 3.1]{AhmadiParrilo2013}}]\label{prop:sos-convexity}
	An $n$-variate polynomial $f$ is SOS convex if and only if its Bregman divergence $D_f(x,y)$ is SOS in the variables $(x,y)$.
\end{proposition}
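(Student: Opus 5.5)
We prove both implications using Taylor's formula with integral remainder, written as an identity between polynomials. For fixed $x,y$, set $h:=x-y$. Then
\begin{equation*}
D_f(x,y) = \int_0^1 (1-t)\,\ip{h}{\nabla^2 f(y+t h)\, h}\,dt .
\end{equation*}
Conversely, the Hessian form is a limit of Bregman divergences:
\begin{equation*}
\ip{z}{\nabla^2 f(x) z} = \lim_{\varepsilon\to 0} \frac{2}{\varepsilon^2} D_f(x+\varepsilon z, x).
\end{equation*}
Since $D_f(x+\varepsilon z,x)$ is a polynomial in $(x,z,\varepsilon)$ whose terms of order below $\varepsilon^2$ cancel, the second identity is in fact an exact polynomial statement. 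Writing $\frac{2}{\varepsilon^2}D_f(x+\varepsilon z,x) = \ip{z}{\nabla^2 f(x) z} + \varepsilon R(x,z,\varepsilon)$ for a polynomial $R$, the Hessian form is obtained by setting $\varepsilon=0$.

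\emph{($\Leftarrow$) $D_f$ SOS implies $f$ SOS convex.} Suppose $D_f(x,y)=\sum_i g_i(x,y)^2$.
\begin{itemize}
\item Substitute $x\mapsto x+\varepsilon z$ and $y\mapsto x$. This gives $D_f(x+\varepsilon z,x)=\sum_i g_i(x+\varepsilon z,x)^2$.
\item Each $g_i(x+\varepsilon z,x)$ is a polynomial in $\varepsilon$ with coefficients polynomial in $(x,z)$. Its $\varepsilon^0$ coefficient is $g_i(x,x)$.
\item Setting $z=0$ forces $\sum_i g_i(x,x)^2 = D_f(x,x)=0$, so every $g_i(x,x)\equiv 0$.
\item Hence $g_i(x+\varepsilon z,x)=\varepsilon\, h_i(x,z)+O(\varepsilon^2)$ with $h_i(x,z)=\ip{\nabla_x g_i(x,x)}{z}$.
\item Comparing $\varepsilon^2$ coefficients gives $\tfrac12\ip{z}{\nabla^2 f(x) z}=\sum_i h_i(x,z)^2$, which is SOS in $(x,z)$.
\end{itemize}
This direction is routine coefficient matching.

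\emph{($\Rightarrow$) $f$ SOS convex implies $D_f$ SOS.} This is the main obstacle, because an integral of SOS polynomials is not obviously SOS in finitely many squares. First write the hypothesis as a Gram representation $\ip{z}{\nabla^2 f(x) z}=\ip{b(x,z)}{Q\, b(x,z)}$ with $Q\succeq 0$. Since the left side is quadratic in $z$, the vector $b$ may be taken to be $z\otimes m(x)$, where $m(x)$ is a vector of monomials. Any SOS decomposition of a form quadratic in $z$ uses only polynomials linear in $z$, because the top-degree and bottom-degree parts in $z$ must vanish.

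Next substitute $x\mapsto y+th$ and $z\mapsto h$. This gives
\begin{equation*}
\ip{h}{\nabla^2 f(y+th) h} = \ip{h\otimes m(y+th)}{Q\,(h\otimes m(y+th))}.
\end{equation*}
Expanding $m(y+th)=\sum_k t^k m_k(y,h)$ shows the integrand equals $\ip{w(y,h)}{(T(t)\otimes Q)\,w(y,h)}$ for a fixed polynomial vector $w$ that stacks the blocks $h\otimes m_k(y,h)$.

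Then
\begin{equation*}
D_f = \ip{w}{\Big(\int_0^1 (1-t)\,(T(t)\otimes Q)\,dt\Big) w}.
\end{equation*}
Here $T(t)=\tau(t)\tau(t)^\top\otimes(\cdot)$ is built from $\tau(t)=(1,t,t^2,\dots)$. More precisely, the matrix $\int_0^1(1-t)\,\tau(t)\tau(t)^\top dt$ is a weighted Hankel moment matrix, hence PSD, and its Kronecker product with $Q$ is PSD. So $D_f$ has a PSD Gram matrix and is SOS.

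The bookkeeping in the expansion is the delicate part. The cleanest route is to treat $t$ as a variable, observe that the integrand is $\sum_j (\ell_j(y,h,t))^2$ with each $\ell_j$ polynomial in $t$, and use the fact that $\int_0^1(1-t)\,p(t)^2\,dt$ is a PSD quadratic form in the coefficients of $p$.
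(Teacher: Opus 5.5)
Your proof is correct. The paper does not prove this statement; it only cites it from Ahmadi and Parrilo, so the relevant comparison is with their argument.

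Your skeleton matches theirs. You use Taylor's formula with integral remainder for ``Hessian form SOS $\Rightarrow$ $D_f$ SOS'', and the second-order expansion of $D_f(x+\varepsilon z,x)$ in $\varepsilon$ for the converse. The difference lies in how the SOS property is transferred:
\begin{itemize}
\item Ahmadi and Parrilo argue topologically. An integral of SOS polynomials of bounded degree is SOS, and the limit of $2\varepsilon^{-2}D_f(x+\varepsilon z,x)$ as $\varepsilon\to0$ is SOS, both because the cone of SOS polynomials of fixed degree is closed.
\item You argue purely algebraically. In the forward direction, the integrand is a sum of squares of polynomials in $(y,h,t)$. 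Since $p\mapsto\int_0^1(1-t)\,p(t)^2\,dt$ is a PSD quadratic form (a weighted Hankel matrix) in the coefficients of $p$, you get the explicit Gram matrix $\bigl(\int_0^1(1-t)\tau(t)\tau(t)^\top dt\bigr)\otimes Q$ for $D_f$.
\item In the converse, you first show $g_i(x,x)\equiv0$ and then compare $\varepsilon^2$-coefficients exactly. This gives $\langle z,\nabla^2 f(x)z\rangle=2\sum_i\langle \partial_1 g_i(x,x),z\rangle^2$.
\end{itemize}
Your route avoids the closedness of the SOS cone entirely and produces explicit certificates from given ones. The cited proof is shorter to state and, through the midpoint condition, also places this equivalence inside a three-way equivalence with the SOS version of the definition of convexity.

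Three small repairs are needed.
\begin{itemize}
\item In $h_i(x,z)$, $\nabla_x g_i(x,x)$ must mean the gradient of $g_i$ with respect to its \emph{first} argument, evaluated at $(x,x)$. The gradient of $x\mapsto g_i(x,x)$ is identically zero.
\item The phrase ``$T(t)=\tau(t)\tau(t)^\top\otimes(\cdot)$'' should read $T(t)=\tau(t)\tau(t)^\top$, with $w$ stacked so that $h\otimes m(y+th)=(\tau(t)^\top\otimes I)\,w$.
\item The reduction to $b=z\otimes m(x)$ is unnecessary. Substituting $(x,z)\mapsto(y+th,h)$ into any SOS decomposition of the Hessian form already gives squares of polynomials in $(y,h,t)$, which is exactly your final ``cleanest route''.
\end{itemize}
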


Finally, we mention that SOS-convex polynomials are a strict subset of convex polynomials in general. The following statement completely characterizes these gaps.
\begin{theorem}[{\cite[Theorem~5.1]{AhmadiParrilo2013}}]\label{thm:AP-convex-not-sosconvex}
	There exist $n$-variate polynomials of degree $d$ that are convex but not SOS convex unless $n=1$ or $d=2$ or $(n,d)=(2,4)$.
\end{theorem}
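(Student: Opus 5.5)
The statement is the Ahmadi--Parrilo classification, so the plan follows their strategy. It has two halves: showing that convexity and SOS convexity coincide in the three exceptional cases, and producing counterexamples in every other case.

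For the positive direction, fix a convex $f$. The biform $\ip{y}{\nabla^2 f(x) y}$ is nonnegative, quadratic in $y$, and of degree $d-2$ in $x$.
\begin{itemize}
\item If $n=1$, the biform equals $f''(x)y^2$. Here $f''$ is a nonnegative univariate polynomial, hence SOS by Hilbert's result, so the biform is SOS.
\item If $d=2$, the Hessian is a constant positive semidefinite matrix, and the biform is a PSD quadratic form, hence SOS.
\item If $(n,d)=(2,4)$, the Hessian is a $2\times2$ matrix with quadratic entries. The biform is a nonnegative biquadratic form in $(x,y)\in\R^2\times\R^2$. One can also assume $f$ homogeneous by splitting it into homogeneous parts; the top-degree part of a convex polynomial is convex. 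Nonnegative biquadratic forms in $2+2$ variables are SOS (a classical result of Calder\'on/Choi), which settles this case.
\end{itemize}
Alternatively, for $(2,4)$ one can dehomogenize and use Hilbert's ternary quartic theorem on $D_f$ via Proposition~\ref{prop:sos-convexity}. This is more delicate, so I would use the biquadratic route.

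For the negative direction, I would first construct two base examples.
\begin{itemize}
\item $(n,d)=(2,6)$: a convex form that is not SOS convex. Ahmadi--Parrilo find such forms numerically and certify them rationally.
\item $(n,d)=(3,4)$: a convex ternary quartic that is not SOS convex.
\end{itemize}
In each case, non-SOS-convexity is certified by a dual separating functional: a moment-type linear functional that is nonnegative on SOS biforms but negative on the Hessian biform. Convexity is proven by showing the Hessian biform is nonnegative, for instance via an SOS multiplier certificate $(\sum x_i^2)^r\ip{y}{\nabla^2 f\, y}\in\Sigma$.

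Next I would propagate these examples to larger $n$ and $d$.
\begin{itemize}
\item \emph{Adding variables.} Set $\tilde f(x,z)=f(x)+\sum z_j^{d}$, or $f(x)+\sum z_j^2$ to keep the degree. This preserves convexity. SOS convexity fails because restricting an SOS to the slice $z=0$, $w=0$ (where $w$ is the direction variable dual to $z$) gives an SOS.
\item \emph{Raising the degree.} Take $\tilde f=f+\epsilon$-free modifications such as $f\cdot$ nothing, or better $f(x)+x_1^{d'}$. The cleaner approach is homogenization and multiplication arguments, keeping $(n,d)$ with $n\ge2$, $d\ge 6$ from $(2,6)$ and $n\ge3$, $d\ge4$ from $(3,4)$.
\end{itemize}
Degrees that are odd or not covered need a separate argument (odd degree forces convex polynomials to have restricted structure). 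Together these cover every pair outside the exceptional list.

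The main obstacle is the base counterexamples. Certifying convexity without SOS convexity needs explicit exact certificates. Extending to all higher degrees while preserving both convexity and the failure of SOS convexity requires care, since adding a high-degree convex term could in principle make the sum SOS convex. For this I would use the slice-restriction argument, choosing additions that vanish on a subspace.
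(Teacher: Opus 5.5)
The paper does not prove this statement; it quotes it from Ahmadi--Parrilo, so your outline has to stand on its own. The architecture is right: base cases $(2,6)$ and $(3,4)$, then propagation. Your variable-adding step is also correct. The degree-raising step, however, is not an argument. Adding $x_1^{d'}$ with no small parameter may well produce an SOS-convex polynomial. The slice trick also does not transfer, because $n$ is now fixed and restricting to the zero set of the added term discards part of $f$: for $n=2$, the slice $x_1=0$ leaves a univariate polynomial, which is always SOS-convex.

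What works is closedness. With $q=\sum_i x_i^{d'}$, the polynomial $f+\epsilon q$ is convex of degree exactly $d'$ for every $\epsilon>0$. SOS-convex polynomials of degree at most $d'$ form a closed cone, namely the preimage of the closed cone of SOS polynomials of bounded degree under the linear map $h\mapsto\ip{y}{\nabla^2 h(x)y}$. So if $f+\epsilon q$ were SOS-convex for all small $\epsilon$, letting $\epsilon\to0$ would make $f$ SOS-convex.

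Moreover, no ``separate argument'' can handle odd $d$. The top-degree part $f_d$ of a convex $f$ is convex, being the pointwise limit of $t^{-d}f(tx)$. For odd $d$, $-f_d(x)=f_d(-x)$ is convex too, so $f_d$ is affine and hence zero when $d\geq3$. For $d=1$, every convex polynomial is affine and trivially SOS-convex. The statement is therefore false for odd $d$ as written and must be read for even $d$, which is the setting of Ahmadi--Parrilo. With that reading, your two base cases, adding variables, and the corrected degree raising cover all remaining pairs.

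Your treatment of homogeneity is wrong in two places. First, for $(n,d)=(2,4)$ you cannot reduce to homogeneous $f$. SOS-convexity of the quartic part says nothing about $f$, whose cubic and quadratic parts need not be convex. Also, for inhomogeneous $f$ the polynomial $\ip{y}{\nabla^2 f(x)y}$ is not a biquadratic form in $2+2$ variables. The fix is to homogenize in $x$ only:
\begin{itemize}
\item Write $\ip{y}{\nabla^2 f(x)y}=\tilde x^\top B(y)\tilde x$ with $\tilde x=(1,x_1,x_2)$ and $B(y)$ a symmetric $3\times3$ matrix of binary quadratic forms.
\item By scaling and continuity, $\hat x^\top B(y)\hat x\geq0$ for all $(\hat x,y)\in\R^3\times\R^2$. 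This is a psd biquadratic form in $3+2$ variables.
\item It is SOS by Calder\'on's theorem: factor the psd univariate matrix polynomial $B(1,t)=L(t)^\top L(t)$ with $\deg L\leq1$ and rehomogenize.
\item Setting the first coordinate of $\hat x$ to $1$ gives the required SOS decomposition.
\end{itemize}
Your fallback via Hilbert's theorem applied to $D_f$ fails, since $D_f$ is a quartic in four variables.

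Second, the base examples cannot be forms. Convex binary forms are always SOS-convex, by the same factorization theorem for psd univariate matrix polynomials. Convex ternary quartic forms are SOS-convex as well; this is the $(3,4)$ case of Ahmadi--Parrilo's classification for forms. This is why the exceptional list for polynomials is shifted by one variable relative to the list for forms. The witnesses in the $(2,6)$ and $(3,4)$ cases must therefore be genuinely inhomogeneous.
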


\subsection{SOS polyconvexity}
\label{sec:SOS-polyconvexity}
Fix positive integers $m,n \in \N$, write $m \wedge n = \min\{m,n\}$ and set $N= \sum_{s=0}^{m \wedge n} \binom{m}{s}\binom{n}{s}$. Let
$p:\R^{m\times n}\to \R^N$
denote the function mapping an $m\times n$ matrix $X$ to the collection of its minors. (The precise order in which these minors are listed is not important.) We recall the definition of a polyconvex function.

\begin{definition}[Polyconvexity]\label{def:polyconvex}
	A function $f:\R^{m\times n}\to\R \cup \{+\infty\}$ is \emph{polyconvex} if there exists a convex function
	$g:\R^N\to\R \cup \{+\infty\}$ such that $f = g \circ p$.
\end{definition}

We also recall the following equivalent characterization of polyconvexity, which resembles the usual first-order characterization of convexity.

\begin{proposition}[{\cite[Theorem~5.6]{dacorogna08}}]\label{prop:first-order-polyconvex}
	A function $f:\R^{m\times n} \to \R$ is polyconvex if and only if there exists a function
	$q:\R^{m\times n}\to \R^N$ such that
	\begin{equation}\label{eq:first-order-polyconvex}
		f(X)-f(Y)- \ip{q(Y)}{p(X)-p(Y)} \geq 0 \quad \forall X,Y \in \R^{m\times n}.
	\end{equation}
\end{proposition}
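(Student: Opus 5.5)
Both directions reduce to the standard correspondence between convex functions and their affine supporting minorants, transported to the space $\R^N$ of minors via the map $p$.

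\emph{Necessity.} Suppose $f = g\circ p$ with $g:\R^N\to\R\cup\{+\infty\}$ convex. Since $f$ is real-valued, $g$ is finite on $p(\R^{m\times n})$. However, $g$ need not be finite or subdifferentiable on all of $\R^N$. The fix is to replace $g$ by a better-behaved convex function that still agrees with $f$ on the image of $p$. The natural candidate is the convex envelope on $\R^N$ of the function $h$ that equals $f(X)$ at $\xi=p(X)$ and $+\infty$ elsewhere; denote this envelope by $\tilde g$.

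\begin{itemize}
\item Because $g$ is a convex minorant of $h$, we have $g\le\tilde g\le h$, so $\tilde g\circ p = f$.
\item The effective domain of $\tilde g$ is the convex hull of $p(\R^{m\times n})$.
\item This hull is all of $\R^N$: the first coordinate of $p$ is the constant minor $1$, so the image lies in the hyperplane $\{\xi_0=1\}$, and its affine hull is that whole hyperplane because the minors are linearly independent functions.
\end{itemize}

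Hence $\tilde g$ is a finite convex function on an affine space. It therefore has a nonempty subdifferential at every point, in particular at each $p(Y)$. Choosing $q(Y)\in\partial\tilde g(p(Y))$ gives
\[
\tilde g(\xi)\ge \tilde g(p(Y))+\ip{q(Y)}{\xi-p(Y)} \quad \text{for all } \xi.
\]
Evaluating at $\xi=p(X)$ yields \eqref{eq:first-order-polyconvex}.

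\emph{Sufficiency.} Given $q$ satisfying \eqref{eq:first-order-polyconvex}, define
\[
g(\xi)\ceq\sup_{Y}\bigl[f(Y)+\ip{q(Y)}{\xi-p(Y)}\bigr].
\]
As a supremum of affine functions, $g$ is convex with values in $\R\cup\{+\infty\}$. Taking $Y=X$ gives $g(p(X))\ge f(X)$. Inequality \eqref{eq:first-order-polyconvex} gives the reverse bound $g(p(X))\le f(X)$. Hence $f=g\circ p$, and $f$ is polyconvex.

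\emph{Main obstacle.} The delicate step is in necessity: showing that the relevant convex function is finite, or at least subdifferentiable, at the points $p(Y)$. Points of the image of $p$ could a priori lie on the relative boundary of the domain of $g$, where subgradients may fail to exist. The remedy above handles this by proving that the convex hull of the image of $p$ is the full affine hyperplane $\{\xi_0=1\}$. If that claim proved awkward, I would instead show directly that each $p(Y)$ lies in the relative interior of this convex hull, using linear independence of the minors together with the positive and negative scalings available along rank-one directions.
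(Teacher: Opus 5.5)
Your sufficiency direction is correct, and so is the skeleton of your necessity argument: replace $g$ by the convex envelope $\tilde g$ of $h$, use $g\le\tilde g\le h$ to get $\tilde g\circ p=f$ and $\tilde g>-\infty$, then take subgradients of a finite convex function. The paper gives no proof of its own, only the citation, and this is the standard route.

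The gap is in the one step that carries all the content. You assert that $C:=\operatorname{conv}p(\R^{m\times n})$ is the whole hyperplane $H=\{\xi_0=1\}$. But linear independence of the minors only shows that the \emph{affine} hull of the image is $H$. Without the convex-hull statement, $\tilde g$ is finite only on the full-dimensional convex set $C\subseteq H$. The points $p(Y)$ could then lie on its relative boundary, and the subgradient step fails. This is not pedantry. For the lift $\tau(x)=(x,x^2)$, the functions $1,x,x^2$ are linearly independent, yet $\operatorname{conv}\tau(\R)=\{(u,v):v\ge u^2\}$. The function $g(u,v)=-v^{3/4}$ for $v\ge 0$ (and $+\infty$ for $v<0$) is convex with $g(x,x^2)=-|x|^{3/2}$. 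However, no $a,b\in\R$ satisfy $-|x|^{3/2}\ge ax+bx^2$ for all $x$. Your necessity argument would apply verbatim to $\tau$ and prove a false statement. So some structural property of minors beyond linear independence has to enter.

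The missing ingredient is that minors are affine along rank-one lines. In particular, each minor is affine in every single entry $X_{ij}$, because each entry occurs at most once in each monomial of a determinant. Hence, if $L(X)=\ip{\ell}{p(X)}$ is nonconstant, choose $X_0$ and $(i,j)$ with $\partial L/\partial X_{ij}(X_0)\neq 0$. Then $t\mapsto L(X_0+t\,e_i\otimes e_j)$ is affine with nonzero slope, so $L$ is unbounded above and below.

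Now suppose some $z\in H$ were not in $C$. Proper separation of $z$ from $C$ would give $\ell$ with $\ip{\ell}{p(X)}\le\ip{\ell}{z}$ for all $X$. Since the affine hull of $C$ is $H$ (this is where linear independence is genuinely used), properness forces $\ell$ to be nonconstant on $H$. Hence $L$ is nonconstant and bounded above, a contradiction. Running the same argument with a supporting hyperplane at a relative boundary point of $C$ proves your fallback claim that every $p(Y)$ lies in the relative interior of $C$. With this lemma inserted, the rest of your necessity proof goes through unchanged.
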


Based on these two characterizations, we introduce two computationally tractable sufficient conditions for the polyconvexity of a polynomial. In the first and perhaps most natural one, we insist that the function $g$ appearing in Definition \ref{def:polyconvex} be an SOS-convex polynomial. In the second, we restrict the map $q$ in Proposition \ref{prop:first-order-polyconvex} to be a polynomial and strengthen inequality \cref{eq:first-order-polyconvex} into an SOS constraint.

\begin{definition}[Lifted SOS polyconvexity]\label{def:lifted-sos-pc}
	A polynomial $f:\R^{m\times n}\to\R$ is \emph{lifted SOS polyconvex} if there exists an SOS-convex polynomial $g:\R^N\to\R$ such that $f = g \circ p$.
\end{definition}	

\begin{definition}[SOS polyconvexity]\label{def:first-order-sos-pc}
	A polynomial $f:\R^{m\times n}\to\R$ is \emph{SOS polyconvex} if there exists a \emph{polynomial} map
	$q:\R^{m\times n}\to\R^N$ such that
	$f(X)-f(Y)- \ip{q(Y)}{p(X)-p(Y)}$
	is a sum of squares polynomial in the entries of $X$ and $Y$.
\end{definition}

Note that these definitions allow for polynomials $g$ and $q$ of arbitrarily large degree. For practical computations that translate SOS constraints into SDPs, these degrees must be fixed a priori. For a given degree choice, any point in the feasible set of the SDP certifies polyconvexity. Infeasibility of the SDP, instead, is inconclusive and one should repeat the test for larger polynomial degrees.

\subsection{Examples}\label{ss:sos-pc-examples}
We now demonstrate the usefulness of SOS polyconvexity and lifted SOS polyconvexity by providing elementary proofs that two classical functions are polyconvex.

\subsubsection{The Alibert--Dacorogna--Marcellini function}\label{ss:dam-sos-pc}

It is well known (see \cite{DacorognaMarcellini1988,AlibertDacorogna1992} or \cite[\S5.3.8]{dacorogna08}) that the function
$f(X) = \matrixnorm{X}^2(\matrixnorm{X}^2 - 2\det X)$ 
is polyconvex. Here, we prove that it is in fact SOS polyconvex. Specifically, fixing the order of the minors vector to be $p(X)=(X_{11}, X_{21}, X_{12}, X_{22}, \det X)$, we use semidefinite programming to search for a cubic polynomial vector $q:\R^{2\times 2} \to \R^5$ such that the polynomial
$f(X)-f(Y)-\ip{q(Y)}{p(X)-p(Y)}$
is SOS. After rounding numerical coefficients, the SDP solution gives
\begin{equation}
	q(Y) = 
	\begin{pmatrix}
		4Y_{11}(\matrixnorm{Y}^2 - \det Y) \\
		4Y_{21}(\matrixnorm{Y}^2 - \det Y) \\
		4Y_{12}(\matrixnorm{Y}^2 - \det Y) \\
		4Y_{22}(\matrixnorm{Y}^2 - \det Y) \\
		-2\matrixnorm{Y}^2
	\end{pmatrix}.
\end{equation}
Interestingly, this is precisely the choice made in \cite[\S5.3.8]{dacorogna08}. 

The success of our SDP computation provides a numerical certificate that $f(X)$ is SOS polyconvex, but this certificate is subject to roundoff error. For an analytical proof, we set
\begin{equation*}
	S(X) := \begin{pmatrix}
		X_{22} & -X_{21} \\ -X_{12} & X_{11}
	\end{pmatrix}
\end{equation*}
and verify through a lengthy but straightforward calculation that the polynomial $f(X)-f(Y)-\ip{q(Y)}{p(X)-p(Y)}$ admits the following SOS decomposition:
\begin{align}
	&\frac14 \left( \matrixnorm{X}^2 - 2\det X - \ip{X}{Y} + \ip{X}{S(Y)}\right)^2
	\nonumber \\
	+
	&\frac14 \left( \matrixnorm{Y}^2 - 2\det Y - \ip{Y}{X} + \ip{Y}{S(X)}\right)^2
	\nonumber \\
	+
	&\frac14 \left( 2\det X - 2\ip{X}{Y} - 2\det Y - \matrixnorm{X}^2 + 3\matrixnorm{Y}^2 \right)^2
	\nonumber \\
	+
	&\frac34 \Big( (X_{12} + X_{21})(Y_{11} - Y_{22}) + (X_{22} - X_{11})(Y_{12} + Y_{21}) \Big)^2
	\nonumber \\
	+
	&\frac14 \Big( (X_{11} + X_{22})(Y_{12} - Y_{21}) + (X_{21} - X_{12})(Y_{11} + Y_{22})\Big)^2
	\nonumber \\
	+
	&\frac14 \Big( (X_{12} - X_{21})(X_{11} - X_{22}) - (Y_{12} - Y_{21})(Y_{11} - Y_{22}) \Big)^2
	\nonumber \\
	+
	&\frac14 \Big( (X_{12} + X_{21})(X_{11} + X_{22}) - (Y_{12} + Y_{21})(Y_{11} + Y_{22}) \Big)^2
	\nonumber \\
	+
	&\frac14 \Big( (X_{11} - X_{22})^2  + (X_{12} + X_{21})^2 \Big) (X_{12} - X_{21} - Y_{12} + Y_{21})^2
	\nonumber \\
	+
	&\frac14 \Big( (X_{11} - X_{22})^2 +  (X_{12} + X_{21})^2 \Big) (X_{11} + X_{22} - Y_{11} - Y_{22})^2
	\nonumber \\
	+
	&\frac14 \Big( (Y_{11} - Y_{22})^2 + (Y_{12} + Y_{21})^2 \Big) (Y_{12} - Y_{21} - X_{12} + X_{21})^2
	\nonumber \\
	+
	&\frac14 \Big( (Y_{11} - Y_{22})^2 +  (Y_{12} + Y_{21})^2 \Big) (Y_{11} + Y_{22} - X_{11} - X_{22})^2
	\nonumber \\
	+
	&\frac14 \Big( X_{11}^2 - X_{22}^2 - Y_{11}^2 + Y_{22}^2 \Big)^2
	+\frac14 \Big( X_{12}^2 - X_{21}^2 - Y_{12}^2 + Y_{21}^2 \Big)^2. \nonumber
\end{align}
%
\subsubsection{A double-well function}
The double-well function 
\begin{equation*}
	f(X)=\matrixnorm{X-I}^2\matrixnorm{X+I}^2
\end{equation*}
was recently shown to be polyconvex \cite{hk25}. Here, we prove that it can be written as an SOS-convex polynomial of the minors of $X$, so it is in fact lifted SOS polyconvex. 

To accomplish this, we begin by expanding $\matrixnorm{X\pm I}^2 = \matrixnorm{X}^2 + n \pm 2 \ip{X}{I}$ to rewrite
\begin{align*}
	f(X) 
	&= \left( \matrixnorm{X}^2 + n \right)^2 - 4\ip{X}{I}^2 \\
	&= \matrixnorm{X}^4 + 2n \matrixnorm{X}^2 +n^2 - 4\tr( X )^2.
\end{align*}
Next, we observe that
\begin{align*}
	4\tr( X )^2 
	&= 4\tr( X^2 ) + 4\tr( X )^2 - 4\tr( X^2 ) \phantom{\sum_{i<i}}\\
	&= 4\langle{X^\top},{X}\rangle + 8\sum_{i<j} \left( X_{ii}X_{jj} - X_{ij}X_{ji} \right)\\
	&= \matrixnorm{X+X^\top}^2 - \matrixnorm{X-X^\top}^2 + 8\sum_{i<j} \left( X_{ii}X_{jj} - X_{ij}X_{ji} \right).
\end{align*}
Using this identity, together with the equality $\matrixnorm{X}^2 = \frac14\matrixnorm{X+X^\top}^2+\frac14\matrixnorm{X-X^\top}^2$, we may rewrite
\begin{align}\label{e:dw-sos-convex-repr}
	f(X) 
	= \matrixnorm{X}^4 + \frac12 (n-2) \matrixnorm{X+X^\top}^2 + \frac12 (n+2) \matrixnorm{X-X^\top}^2&
	\nonumber \\
	+ 8\sum_{i<j} \left( X_{ij}X_{ji} - X_{ii}X_{jj} \right) + n^2&.
\end{align}
We claim that the expression on the right-hand side is a positive linear combination of SOS-convex polynomials of the minors of $X$. This implies that the whole expression is an SOS-convex polynomial of the minor of $X$, proving that $f(X)$ is lifted SOS polyconvex.

To prove our claim, observe that $n^2$ is a constant, hence SOS-convex. Each term $X_{ij}X_{ji} - X_{ii}X_{jj}$ is a linear (hence, SOS-convex) polynomial of the minors of $X$ since it is precisely one of its $2 \times 2$ minors. The functions $\smash{\matrixnorm{X + X^\top}^2}$ and $\smash{\matrixnorm{X - X^\top}^2}$ are convex quadratic polynomials of the entries of $X$, so they are SOS-convex by Theorem \ref{thm:AP-convex-not-sosconvex}. Finally, the function $\matrixnorm{X}^4$ is SOS-convex because its Hessian $H(X)$, viewed as a linear operator on $\R^{n\times n}$, satisfies $\langle Z, H(X)Z \rangle = 4 \ip{X}{Z}^2 + 2 \matrixnorm{X}^2\matrixnorm{Z}^2$ and the right-hand side is clearly a sum of squares.

\subsection{Gaps between polyconvexity and SOS polyconvexity}
\label{ss:gaps}
We now turn to studying the relation between polyconvexity, SOS polyconvexity, and lifted SOS polyconvexity. Our first result shows that these notions are nested.

\begin{theorem}
	Let $f:\R^{m\times n}\to\R$ be polynomial. Then,
	\begin{equation*}
		f \text{ is lifted SOS polyconvex}
		\quad\implies\quad
		f \text{ is SOS polyconvex}
		\quad\implies\quad
		f \text{ is polyconvex}.
	\end{equation*}
\end{theorem}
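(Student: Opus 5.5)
We prove the two implications separately. The second one is almost immediate from Proposition~\ref{prop:first-order-polyconvex}. The first one uses the Bregman-divergence characterization of SOS convexity in Proposition~\ref{prop:sos-convexity}, composed with the minors map $p$.

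\emph{Lifted SOS polyconvex $\Rightarrow$ SOS polyconvex.} Suppose $f = g\circ p$ with $g:\R^N\to\R$ an SOS-convex polynomial. We take
\[
q(Y) := \nabla g(p(Y)).
\]
This is a polynomial map $\R^{m\times n}\to\R^N$, because $\nabla g$ is polynomial and every minor is a polynomial in the entries of $Y$. Then
\[
f(X)-f(Y)-\ip{q(Y)}{p(X)-p(Y)} = D_g\big(p(X),p(Y)\big).
\]
By Proposition~\ref{prop:sos-convexity}, $D_g(u,v)=\sum_i h_i(u,v)^2$ for some polynomials $h_i$ in the variables $(u,v)\in\R^N\times\R^N$. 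Substituting $u=p(X)$ and $v=p(Y)$ gives
\[
D_g\big(p(X),p(Y)\big)=\sum_i h_i\big(p(X),p(Y)\big)^2 ,
\]
and each $h_i(p(X),p(Y))$ is a polynomial in the entries of $X$ and $Y$. So the expression is SOS in $(X,Y)$.

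The one point needing care here is the even-degree convention in the SOS definition. Since $f(X)-f(Y)-\ip{q(Y)}{p(X)-p(Y)}$ is literally a finite sum of squares of polynomials, its degree is automatically even. Each squared polynomial has degree at most half the total degree, because the leading forms of a sum of squares cannot cancel. Hence the definition is met.

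\emph{SOS polyconvex $\Rightarrow$ polyconvex.} If $q$ is a polynomial map making the expression in Definition~\ref{def:first-order-sos-pc} SOS, then the expression is pointwise nonnegative for all $X,Y$. This is exactly inequality \cref{eq:first-order-polyconvex}. Since $f$ is a polynomial, it is finite-valued, so Proposition~\ref{prop:first-order-polyconvex} applies and shows that $f$ is polyconvex.

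Alternatively, one can argue directly. Define
\[
g(z) := \sup_Y \Big[f(Y)+\ip{q(Y)}{z-p(Y)}\Big].
\]
This $g$ is convex, as a supremum of affine functions of $z$. Evaluating at $z=p(X)$, the nonnegativity of the SOS expression gives $g(p(X))\le f(X)$, and choosing $Y=X$ gives $g(p(X))\ge f(X)$. Hence $f=g\circ p$.

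The first implication is the only one with real content. Even there the argument is a direct substitution: the main point is to recognize that the Bregman divergence of $g$, composed with $p$, is precisely the first-order polyconvexity gap with $q=\nabla g\circ p$.
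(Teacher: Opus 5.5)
Your proof is correct and takes the same approach as the paper: you set $q(Y)=\nabla g(p(Y))$, identify the first-order gap with the Bregman divergence $D_g(p(X),p(Y))$, and conclude it is SOS by Proposition~\ref{prop:sos-convexity}. The paper simply calls the second implication clear, and your appeal to Proposition~\ref{prop:first-order-polyconvex}, or your direct supremum construction of $g$, fills in that step correctly.
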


\begin{proof}
	We only need to prove the first implication, since the second one is clear.
	Let $g:\R^N \to \R$ be the SOS-convex polynomial such that $f= g\circ p$.  By Proposition~\ref{prop:sos-convexity},
	its Bregman divergence $D_g(q,r)$ is an SOS polynomial in the entries of $q,r \in \R^N$. Then, since the vectors $p(X),p(Y) \in \R^N$ depend polynomially on $X$ and $Y$, the function
	\begin{equation*}
		(X,Y) \mapsto f(X)-f(Y)- \ip{\nabla g(p(Y))}{p(X)-p(Y)}  = D_g(p(X),p(Y))
	\end{equation*}
	is an SOS polynomial in the entries of $X$ and $Y$. This means that $f$ satisfies the definition of SOS polyconvexity with $q(Y)=\nabla g(p(Y))$.
\end{proof}

The remaining results in this section show that the reverse implications are \emph{false} in general. We start by building on the known gap between convexity and SOS-convexity (cf. Theorem~\ref{thm:AP-convex-not-sosconvex}) to exhibit a gap between SOS polyconvexity and polyconvexity. We focus on the case $m,n\geq 2$ because, when $m=1$ or $n=1$, polyconvexity and SOS polyconvexity reduce to convexity and SOS-convexity, respectively.

\begin{theorem}\label{thm:pc-not-SOS-pc}
	Fix $m,n,d \in \N$ such that
	$m, n\geq 2$, $d>2$, and $(m, d) \neq (2,4)$ or $(n,d) \neq (2,4)$.
	There exists a polyconvex polynomial $f:\R^{m\times n} \to \R$ of degree $d$ that is not SOS polyconvex.
\end{theorem}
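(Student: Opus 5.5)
We reduce the statement to the convex-versus-SOS-convex gap of Theorem~\ref{thm:AP-convex-not-sosconvex}. The plan is to take a convex, non-SOS-convex polynomial $h$ depending only on a few \emph{entries} of $X$, arranged so that polyconvexity of $f=h\circ(\text{entries})$ is automatic. We then show that SOS polyconvexity of $f$ would force SOS-convexity of $h$.

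\emph{Construction.} By hypothesis, either $(m,d)\neq(2,4)$ or $(n,d)\neq(2,4)$; by transposing if necessary, assume $(m,d)\neq(2,4)$. Since $m\ge 2$ and $d>2$, Theorem~\ref{thm:AP-convex-not-sosconvex} gives an $m$-variate convex polynomial $h$ of degree $d$ that is not SOS convex. Let $f(X)=h(X_{11},X_{21},\dots,X_{m1})$, a polynomial in the first column of $X$. It has degree $d$. It is polyconvex because it is a convex function of entries of $X$, which are among the minors; take $g(z)=h(\text{those coordinates})$ in Definition~\ref{def:polyconvex}.

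\emph{Reduction step.} Suppose $f$ is SOS polyconvex with polynomial $q$, so
\[
\sigma(X,Y)=f(X)-f(Y)-\ip{q(Y)}{p(X)-p(Y)}
\]
is SOS. Restrict to matrices supported in the first column: $X=[x\,|\,0]$ and $Y=[y\,|\,0]$ with $x,y\in\R^m$. Restricting an SOS polynomial to a linear subspace keeps it SOS. On this subspace every minor of size $s\ge2$ vanishes, since every $s\times s$ submatrix has a zero column. The constant minor (size $0$) is identically $1$, so its difference is $0$. Hence $\ip{q(Y)}{p(X)-p(Y)}=\ip{\tilde q(y)}{x-y}$, where $\tilde q(y)\in\R^m$ collects the components of $q([y|0])$ corresponding to the first-column entries. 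We get that
\[
h(x)-h(y)-\ip{\tilde q(y)}{x-y}
\]
is SOS in $(x,y)$, where $\tilde q$ is polynomial.

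\emph{Identifying $\tilde q$ with $\nabla h$.} This is the main (though mild) obstacle. Proposition~\ref{prop:sos-convexity} requires the Bregman divergence with $\nabla h$, not an arbitrary $\tilde q$. Set $\phi(x)=h(x)-h(y)-\ip{\tilde q(y)}{x-y}$. For fixed $y$, $\phi\ge0$ and $\phi(y)=0$, so $x=y$ is a global minimizer and $\nabla_x\phi(y)=0$. This gives $\tilde q(y)=\nabla h(y)$ for all $y$. Therefore $D_h(x,y)$ is SOS, and Proposition~\ref{prop:sos-convexity} shows $h$ is SOS convex, a contradiction. In the transposed case, we use $f(X)=h(X_{11},\dots,X_{1n})$, a function of the first row, with the same argument; this requires $(n,d)\neq(2,4)$ and $n\ge2$.
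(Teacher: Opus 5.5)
Your proof is correct and is essentially the paper's argument: the same first-column embedding $f(X)=h(X_{11},\dots,X_{m1})$ of a convex, non-SOS-convex polynomial, the same restriction of the SOS certificate to first-column matrices so that only the linear minors survive, and the same identification $\tilde q=\nabla h$ before invoking Proposition~\ref{prop:sos-convexity} (your first-order-optimality argument is the paper's subgradient remark). One caveat applies to both your argument and the paper's: a convex polynomial of odd degree $d\geq 3$ does not exist, so Theorem~\ref{thm:AP-convex-not-sosconvex}, and hence the whole construction, only gives the result for even $d$.
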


\begin{remark}
	The cases $d=2$ or $(m,d)=(n,d)=(2,4)$ remain open. Convexity and SOS convexity coincide in these cases, but this does not rule out a gap between polyconvexity and SOS polyconvexity.
\end{remark}

\begin{proof}
	We assume for definiteness that $(m,d)\neq (2,4)$; the case $(n,d)\neq (2,4)$ is handled similarly. Since $m>1$, $d>2$ and $(m,d) \neq (2,4)$, we can use Theorem \ref{thm:AP-convex-not-sosconvex} to choose an $m$-variate convex polynomial $\hat{f}$ of degree $d$ that is not SOS convex. We claim that the function
	\begin{equation*}
		\begin{array}{llll}
			f:&\R^{m\times n} & \to & \R \\
			&X& \mapsto & f(X)=\hat{f}(X_{11},\ldots,X_{m1})
		\end{array}
	\end{equation*}
	is polyconvex but not SOS polyconvex.
	
	Polyconvexity is clear. We now prove that $f$ is not SOS polyconvex because SOS polyconvexity would imply that $\hat{f}$ is SOS convex. Precisely, suppose there exists a polynomial map $q:\R^{m\times n} \to \R^N$ such that
	\begin{equation*}
		f(X) - f(Y) - \ip{q(Y)}{p(X)-p(Y)}  \text{ is SOS}.
	\end{equation*}
	Now, let $e_1 \in \R^n$ be the first canonical unit vector and the matrix-valued linear function $F:\R^{m} \to \R^{m\times n}$ defined by $F(x) = x\otimes e_1$. Since the only nonzero minors of $F(x)$ are those depending linearly on $x$, there exists a polynomial map $\hat{q}:\R^m \mapsto \R^m$ such that
	\begin{equation*}
		\ip{q(F(y))}{p(F(x))} = 
		\ip{\hat{q}(y)}{x} \quad \forall x,y\in\R^m.
	\end{equation*}
	Then, the polynomial
	\[
	\hat{f}(x) - \hat{f}(y) - \ip{\hat{q}(y)}{x-y}
	= f(F(y)) - f(F(x)) - \ip{q(F(x))}{p(F(y))-p(F(x))}
	\]
	is a sum of squares. This, in particular, means that $\hat{q}(y)$ is a subgradient of the polynomial $\hat{f}$ at $y$, and since $\hat{f}$ is a polynomial we must in fact have $\hat{q}(y) = \nabla \hat{f}(y)$. We then conclude that $\hat{f}(x) - \hat{f}(y) - \ip{\nabla \hat{q}(y)}{x-y}$ is an SOS polynomial, so $\hat{f}$ is SOS-convex by Proposition~\ref{prop:sos-convexity}.
\end{proof}

Next, we prove that the set of lifted SOS polyconvex polynomials is strictly smaller than the set of SOS polyconvex polynomials in general. We start with an explicit example based on the same polyconvex function studied in Section~\ref{ss:dam-sos-pc} and in \cite{DacorognaMarcellini1988,AlibertDacorogna1992}.

\begin{theorem}\label{thm:dam}
	The function $f:\R^{2\times 2} \to \R$ defined by $f(X)=\matrixnorm{X}^2 ( \matrixnorm{X}^2 - 2\det X )$ is SOS polyconvex but not lifted SOS polyconvex.
\end{theorem}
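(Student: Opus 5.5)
The SOS polyconvexity half of the statement is already settled by the explicit SOS decomposition of $f(X)-f(Y)-\ip{q(Y)}{p(X)-p(Y)}$ given in Section~\ref{ss:dam-sos-pc}. So the work is to show that $f$ is not lifted SOS polyconvex. I plan to prove the stronger claim that there is no \emph{convex polynomial} $g:\R^5\to\R$ with $f=g\circ p$. Writing $p(X)=(X,\det X)$, I would argue by contradiction from such a $g(x,\delta)$.

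\textbf{Step 1: $g$ is affine and nonincreasing in $\delta$.} Decompose $X=C+D$ into a conformal part $C=\begin{pmatrix}a&-b\\ b&a\end{pmatrix}$ and an anticonformal part $D$. These parts are orthogonal, and
\[
\matrixnorm{X}^2=\matrixnorm{C}^2+\matrixnorm{D}^2,\qquad 2\det X=\matrixnorm{C}^2-\matrixnorm{D}^2 .
\]
Hence $f(X)=2\matrixnorm{D}^2(\matrixnorm{C}^2+\matrixnorm{D}^2)$, which vanishes exactly on conformal matrices. Therefore $g(\pm C,\matrixnorm{C}^2/2)=0$ for every conformal $C$. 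By convexity, $g(0,\matrixnorm{C}^2/2)\le 0$, so $g(0,\delta)\le 0$ for all $\delta\ge 0$. A convex univariate polynomial that is bounded above on a half-line is nonincreasing and affine there. Thus $e_\delta=(0,1)$ is a direction of recession of the convex function $g$, so $\delta\mapsto g(x,\delta)$ is nonincreasing for every $x$. Being a convex polynomial in $\delta$ that is bounded above on $[\delta_0,\infty)$, it is affine. Hence
\[
g(x,\delta)=a(x)-b(x)\,\delta,\qquad b\ge 0 .
\]

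\textbf{Step 2: $b$ is constant and $a$ is convex.} The Hessian of $g$ has a zero $\delta\delta$-entry. Positive semidefiniteness then forces the mixed block $-\nabla b(x)$ to vanish, so $b$ is a constant. Consequently $a(X)=f(X)+b\det X$ must be a convex quartic polynomial on $\R^{2\times 2}$.

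\textbf{Step 3: contradiction via the leading form.} The top-degree homogeneous part of a convex polynomial is convex, because $t^{-4}a(tX)\to h(X)$ and limits of convex functions are convex. Here that part is $h=2\matrixnorm{D}^2(\matrixnorm{C}^2+\matrixnorm{D}^2)$. Restrict $h$ to the plane $X=uC_0+vD_0$ with $\matrixnorm{C_0}=\matrixnorm{D_0}=1$, which gives $h=2u^2v^2+2v^4$. Its Hessian is
\[
\begin{pmatrix}4v^2&8uv\\ 8uv&4u^2+24v^2\end{pmatrix},
\]
with determinant $96v^4-48u^2v^2$. This is negative whenever $u^2>2v^2$ and $v\neq0$. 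So $h$ is not convex, contradicting Step 3's requirement, and no convex polynomial $g$ exists. In particular, no SOS-convex $g$ exists.

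The main obstacle I expect is Step 1: making the recession-direction argument rigorous. It needs the standard fact that if a convex function is bounded above along one ray, then it is nonincreasing along every parallel ray; I would cite or reprove this in one line. The remaining steps are routine computations.
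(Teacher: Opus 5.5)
Your proof is correct, and it takes a genuinely different route from the paper's.

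The paper proves the same stronger claim, namely that no convex polynomial $g$ satisfies $g(X,\det X)=f(X)$, but it argues algebraically. It invokes the real Nullstellensatz (the ideal generated by $y-\det X$ is real radical) to write every polynomial extension as $g=f+(y-\det X)h$. It then expands $h$ into weighted-homogeneous pieces and uses $\nabla^2 g(0,y)\succeq 0$ to force $h_0=h_1=0$ and $h_2=c_{2,1}(y+\det X)$ with $c_{2,1}\ge 0$. Finally, it finds a negative $2\times 2$ minor in the $z^2$-coefficient of the Hessian at $X=\mathrm{diag}(z,2z)$, $y=0$.

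You instead use the geometry of $f$. Its zero set is the cone of conformal matrices, on which $\det$ is unbounded. The midpoint of $\pm C$ then gives $g(0,\cdot)\le 0$ on a half-line. The recession step is sound because $g$ is continuous: $g((1-\mu)x,\delta+\lambda)\le(1-\mu)g(x,\delta)+\mu\, g(0,\delta+\lambda/\mu)\le(1-\mu)g(x,\delta)+\mu\, g(0,\delta)$, and letting $\mu\to 0$ gives monotonicity in $\delta$ for every $x$. From there, polynomiality forces affinity in $\delta$, and the zero $\delta\delta$-entry of the Hessian kills $\nabla b$. The leading-form limit then reduces everything to the non-convexity of $f$ itself, which your $2\times 2$ Hessian on $uC_0+vD_0$ confirms. (Your phrase ``Step 3's requirement'' should refer to Step 2.)

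Your argument avoids the Nullstellensatz and the coefficient bookkeeping behind $A_0$, $A_1$ and $B$. It also gives a structural description of every convex polynomial extension, $g=a(X)-b\delta$ with constant $b\ge 0$, and shows clearly where the obstruction comes from. Your Step 1 even constrains non-polynomial convex extensions: they must be nonincreasing in $\delta$.

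The paper's approach is a more mechanical template: parametrize all extensions modulo the ideal, then test local Hessian conditions. It does not rely on spotting special structure such as the conformal/anticonformal splitting, so it adapts more readily to polynomials whose zero sets carry no such structure.
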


We postpone the proof of this result to Section~\ref{ss:proof-DAM} and, instead, immediately use it to deduce the following general statement.
\begin{theorem}
	If $m,n \in \N$ satisfy $m, n\geq 2$, there exists a degree-four SOS polyconvex polynomial $f:\R^{m\times n} \to \R$ that is not lifted SOS polyconvex.
\end{theorem}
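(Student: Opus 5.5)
Take the function of Theorem~\ref{thm:dam} and embed it in the top-left $2\times 2$ block. Let $\pi:\R^{m\times n}\to\R^{2\times 2}$ extract the submatrix of entries $X_{ij}$ with $i,j\in\{1,2\}$, and define $f(X)=\hat f(\pi(X))$, where $\hat f(Z)=\matrixnorm{Z}^2(\matrixnorm{Z}^2-2\det Z)$. This polynomial has degree four. The proof has two halves, mirroring the proof of Theorem~\ref{thm:pc-not-SOS-pc}.

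\emph{SOS polyconvexity.} Take the map $\hat q$ certified in Section~\ref{ss:dam-sos-pc}. Every entry of $\hat p(\pi(X))=(X_{11},X_{21},X_{12},X_{22},\det\pi(X))$ is one of the minors of $X$. So define $q(Y)\in\R^N$ by placing $\hat q(\pi(Y))$ in the coordinates of those five minors and zeros elsewhere. Then
\begin{equation*}
f(X)-f(Y)-\ip{q(Y)}{p(X)-p(Y)}=\hat f(\pi X)-\hat f(\pi Y)-\ip{\hat q(\pi Y)}{\hat p(\pi X)-\hat p(\pi Y)}.
\end{equation*}
This is the SOS decomposition from Section~\ref{ss:dam-sos-pc} composed with the linear map $\pi$, and composing an SOS polynomial with a linear map keeps it SOS.

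\emph{Not lifted SOS polyconvex.} Suppose $f=g\circ p$ with $g$ SOS-convex on $\R^N$. Let $E:\R^{2\times 2}\to\R^{m\times n}$ pad a $2\times 2$ matrix with zeros. For $X=E(Z)$, the only minors that can be nonzero are the empty minor (constant $1$), the four entries, and $\det Z$. Hence $p(E(Z))=A\,\hat p(Z)+c$ for a fixed injective linear coordinate embedding $A:\R^5\to\R^N$ and a constant vector $c$ (the $0\times0$ minor). Set $\hat g(w)=g(Aw+c)$. Then
\begin{equation*}
\hat g(\hat p(Z))=g(p(E(Z)))=f(E(Z))=\hat f(Z).
\end{equation*}
Moreover, $\hat g$ is SOS-convex: its Hessian is $A^\top\nabla^2 g(Aw+c)A$, so $\ip{y}{\nabla^2\hat g(w)y}=\ip{Ay}{\nabla^2 g(Aw+c)Ay}$ is an SOS polynomial composed with a linear substitution of $(w,y)$. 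This would make $\hat f$ lifted SOS polyconvex, contradicting Theorem~\ref{thm:dam}.

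The main point to handle carefully is checking that restricting to $E(\R^{2\times2})$ gives an \emph{affine} relation between $p$ and $\hat p$, so that SOS-convexity passes to $\hat g$. This holds because every minor of $E(Z)$ is either zero, constant, or one of the coordinates of $\hat p(Z)$. Beyond that, the argument relies entirely on Theorem~\ref{thm:dam}.
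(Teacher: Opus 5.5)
Your proof is correct and follows the paper's argument essentially step for step. You embed the Alibert--Dacorogna--Marcellini function in the top-left $2\times 2$ block and pull back the certificate $q$ by placing it in the five corresponding minor coordinates (the paper's $A^*$); for the converse, you restrict $g$ to zero-padded $2\times 2$ matrices to obtain a lifted SOS-convex representation of the $2\times 2$ function, contradicting Theorem~\ref{thm:dam}. Your use of an affine map $w\mapsto Aw+c$ to account for the constant $0\times 0$ minor is slightly more careful than the paper's purely linear map $L:\R^5\to\R^N$, which glosses over that coordinate.
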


\begin{proof}
	Let $B:\R^{m\times n} \to \R^{2\times 2}$ be the linear map defined via
	\begin{equation*}
		B(X) = \begin{pmatrix}
			X_{11} & X_{12} \\ X_{21} & X_{22}
		\end{pmatrix}.
	\end{equation*}
	Let $f:\R^{2\times 2} \to \R$ be the function from Theorem~\ref{thm:dam}. We claim that the function
	\begin{equation*}
		\begin{array}{llll}
			F:&\R^{m\times n} & \to & \R \\
			&X& \mapsto & f(B(X))
		\end{array}
	\end{equation*}
	is SOS polyconvex but not lifted SOS polyconvex. We prove these two claims separately.
	
	\emph{Claim 1: $F$ is SOS polyconvex.} The function $f$ is SOS polyconvex by Theorem \ref{thm:dam}, so there exists a polynomial map $q:\R^{2\times 2} \to \R^5$ such that the polynomial $f(Y)-f(Z)- \ip{q(Z)}{Y - Z}$ is SOS in the entries of the matrices $Z,Y\in\R^{2\times 2}$. Now, since the five minors listed in $p(B(X)) \in \R^5$ are also listed in $p(X) \in\R^N$, there exists a linear map $A:\R^N \to \R^5$ such that $A(p(X)) = p(B(X))$. Define a polynomial map $Q:\R^{m\times n} \to \R^N$ via $Q(Y) = A^*(q(B(Y)))$, where $A^*$ is the adjoint of $A$. The polynomial
	\begin{align*}
		F(X) - F(Y) - \ip{Q(Y)}{p(X) - p(Y)}
		&= f(B(X)) - f(B(Y) ) - \ip{q(B(Y))}{p(B(X)) - p(B(Y))}
	\end{align*}
	is SOS in $X$ and $Y$, so $F$ is SOS polyconvex.

	\emph{Claim 2: $F$ is not lifted SOS polyconvex.} We prove that if $F$ is lifted SOS polyconvexty, then so is $f$, which is false by Theorem~\ref{thm:dam}. Let $C:\R^{2\times 2} \to \R^{m\times n}$ be the linear map that places $Y \in \R^{2\times 2}$ as the $2\times 2$ top-left block of an otherwise zero $m\times n$ matrix. Since the only nonzero minors of the matrix $C(Y)$ are the minors of $Y$, there exists a linear map $L:\R^5 \to \R^N$ such that $L(p(Y)) = p(C(Y))$. Now, suppose $F$ if lifted SOS polyconvex. Then, there exists an SOS-convex polynomial $G:\R^N \to \R$ such that $F(X)=G(p(X))$. The polynomial $g=G\circ L$ is SOS-convex because it is the composition of an SOS-convex polynomial and a linear function. But then, for all matrices $Y\in\R^{2\times 2}$ we have that
	$f(Y) = F(C(Y)) = G(p(C(Y))) = G(L(p(Y))) = g(p(Y))$, so $f$ is lifted SOS polyconvex.
\end{proof}

\subsubsection{Proof of Theorem \ref{thm:dam}}
\label{ss:proof-DAM}

We now prove Theorem \ref{thm:dam}. Since we have already demonstrated in Section \ref{ss:dam-sos-pc} that the function $f(X)=\matrixnorm{X}^2(\matrixnorm{X}^2 - 2\det X)$ is SOS polyconvex, we only need to show that it is not lifted SOS-polyconvex. This is a consequence of the following stronger result.

\begin{proposition}
	Let $f:\R^{2\times 2} \to \R$ be given by $f(X)=\matrixnorm{X}^2 ( \matrixnorm{X}^2 - 2\det X )$. If a function $g:\R^{2\times 2} \times\R \to \R$ is convex and $g(X, \det X)=f(X)$, then $g$ is not polynomial. 
\end{proposition}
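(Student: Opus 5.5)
The idea is to use the zero set of $f$ to pin down the dependence of $g$ on the determinant variable $\delta$, and then contradict convexity in $X$. Suppose $g$ is a convex polynomial on $\R^{2\times 2}\times\R$ with $g(X,\det X)=f(X)$. Since $\matrixnorm{X}^2\ge 2\det X$, $f$ vanishes exactly on the two-dimensional subspace of conformal matrices $X=\begin{pmatrix} a & -b\\ b & a\end{pmatrix}$, where $\det X=\tfrac12\matrixnorm{X}^2$. So $g$ vanishes on the paraboloid $P=\{(X,\tfrac12\matrixnorm{X}^2): X \text{ conformal}\}$. By convexity, $g\le 0$ on the convex hull of $P$. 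Averaging the points $(Y,\tfrac12\matrixnorm{Y}^2)$ and $(-Y,\tfrac12\matrixnorm{Y}^2)$ for conformal $Y$ shows that this hull contains the ray $\{(0,\delta):\delta\ge 0\}$. Hence $g(0,\delta)\le 0$ for all $\delta\ge0$.

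Next I would use recession properties of convex functions. A convex function bounded above along a ray in direction $e=(0,1)$ has $t\mapsto g(x+te)$ nonincreasing for every base point $x$. Because $g$ is a polynomial, $t\mapsto g(X,\delta+t)$ is then a convex, nonincreasing polynomial in $t$, so it has degree at most one. Therefore
\[
g(X,\delta)=a(X)-c(X)\delta
\]
with polynomials $a$ and $c\ge 0$. The Hessian of $g$ in $(X,\delta)$ has a zero $(\delta,\delta)$ entry and off-diagonal block $-\nabla c(X)$. Positive semidefiniteness then forces $\nabla c\equiv 0$, so $c$ is a constant $c\ge0$, and $a(X)=g(X,0)$ is convex. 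Substituting $\delta=\det X$ gives that
\[
a(X)=f(X)+c\det X=\matrixnorm{X}^4-2\matrixnorm{X}^2\det X+c\det X
\]
must be a convex function on $\R^{2\times2}$.

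The final step is to show that this is impossible for every $c\ge0$.
\begin{itemize}
\item If $c>0$: $f$ is homogeneous of degree four, so its Hessian vanishes at $X=0$. The Hessian of $a$ at $0$ is therefore that of the indefinite quadratic form $c\det X$. For example, along $X=t\,\mathrm{diag}(1,-1)$ we get $a=8t^4-ct^2$, which is not convex near $t=0$.
\item If $c=0$: I need that $f$ itself is not convex. This is the classical fact that the Alibert--Dacorogna--Marcellini family $\matrixnorm{X}^2(\matrixnorm{X}^2-2\gamma\det X)$ is convex only for $\gamma\le \tfrac{2}{3}\sqrt2$, and here $\gamma=1$.
\end{itemize}
I expect the case $c=0$ to be the main obstacle if a self-contained proof is wanted. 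I would first search for a concrete line $X_0+tZ$ with negative second derivative at $t=0$, equivalently a point $X_0$ and direction $Z$ with $\ip{Z}{\nabla^2 f(X_0)Z}<0$. Diagonal slices do not work: on them $f=(s^2+t^2)(s-t)^2$, whose Hessian is positive definite at $(1,0)$ and convex along the obvious lines. So the direction $Z$ should mix diagonal and off-diagonal entries. Failing a clean explicit choice, I would cite the known convexity threshold from \cite[\S5.3.8]{dacorogna08}.
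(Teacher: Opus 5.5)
Your proof is correct, and it takes a genuinely different route from the paper's.

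\textbf{How the two proofs differ.} The paper first invokes the real Nullstellensatz to write $g=f+(y-\det X)h$. It then splits $h$ into weighted-homogeneous pieces and reads off constraints from explicit symbolic Hessians. At $X=0$, viewed as a matrix polynomial in $y$, these force $h_0=h_1=0$ and $h_2=c_{2,1}(y+\det X)$ with $c_{2,1}\ge 0$. At $X=\mathrm{diag}(z,2z)$, $y=0$, the $z^2$-coefficient matrix $B$ has a negative $2\times 2$ minor, which gives the contradiction.

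You instead use the zero set of $f$. Since $f\ge 0$ vanishes exactly on the conformal matrices, where $\det X=\tfrac12\matrixnorm{X}^2$, convexity gives $g(0,\delta)\le 0$ for all $\delta\ge 0$. The recession argument, together with the fact that a convex nonincreasing polynomial in one variable is affine (this is where polynomiality enters), then forces $g(X,\delta)=f(X)+c\det X-c\delta$ with a constant $c\ge 0$.

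\textbf{What each route buys.} Your route avoids both the Nullstellensatz and the large symbolic Hessian bookkeeping. It also yields more structure: in the paper's notation $h\equiv -c$ is constant, so the determination of $h_2$ becomes unnecessary. Your case $c>0$ is exactly the paper's observation that $A_0\succeq 0$ forces $c_{0,1}=0$. The paper's route, by contrast, is a mechanical template that does not rely on the special geometry of the zero set of $f$, and it ends with an explicit numerical certificate.

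\textbf{A correction for the case $c=0$.} Diagonal slices do work, contrary to your remark. The slice $\phi(s,t)=(s^2+t^2)(s-t)^2$ is convex along coordinate lines. However, at $(s,t)=(1,2)$ its Hessian is $\begin{pmatrix}4&-14\\-14&28\end{pmatrix}$, with determinant $-84$. Concretely, along $X=\mathrm{diag}(1+2\tau,\,2+\tau)$ the coefficient of $\tau^2$ in $f$ is $-6$, so the second derivative at $\tau=0$ is $-12$. This is precisely the paper's matrix $B$ with $c_{2,1}=0$. You can therefore replace the citation of the convexity threshold $\gamma\le\frac23\sqrt2$ with this one-line computation and make your proof fully self-contained, although the citation itself is legitimate.
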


\begin{remark}
	An explicit, piecewise-polynomial convex function $g$ satisfying $g(X, \det X)=f(X)$ has been given in \cite{Hartwig1995} (see also \cite[Sect.~5.3.8]{dacorogna08}).
\end{remark}

\begin{proof}
	We assume there exists a convex polynomial $g$ such that $f(X) = g(X, \det X)$ and derive a contradiction in three steps. First, we show that $g$ must be parametrized by a set of polynomials $h_0,\ldots, h_p$ with a special structure. Second, we explicitly determine the polynomials $h_0$, $h_1$ and $h_2$ up to a single nonnegative constant. Finally, we use this explicit characterization to show that $\nabla^2 g$ is not positive semidefinite, contradicting the assumed convexity of $g$.
	
	\paragraph{Step 1: Parameterizing $g$.}
	By construction, the polynomial $g(X,y) - f(X)$ vanishes when $y-\det X=0$. It is not difficult to show (for instance, using \cite[Lemma~2.2]{Laurent2009}) that the ideal generated by the polynomial $p(X,y) = y - \det X$ is real radical. Then, the real Nullstellensatz (see, e.g., \cite[Theorem~2.1]{Laurent2009}) implies that $g(X,y)-f(X)$ must belong to this ideal. This means there exists a polynomial $h$ such that
	\begin{equation}
		g(X,y) = f(X) + (y - \det X) h(X,y).
	\end{equation}
	We now express $h(X, y)$ in a convenient way that reflects the nature of $y$ as a `dummy variable' for $\det X$, which is quadratic in the entries of $X$. Specifically, there exists $p \in \mathbb{N}$ such that we can write
	\begin{subequations}
		\begin{equation}
			h(X, y) = \sum\limits_{d=0}^{p} h_d(X, y)
		\end{equation}
		with
		\begin{equation}
			h_d(X, y) 
			\in \linspan\left\{
			X_{11}^{\alpha_1} X_{12}^{\alpha_2} X_{21}^{\alpha_3} X_{22}^{\alpha_4} y^{\alpha_5}:\;
			\alpha_1 + \alpha_2 + \alpha_3 +\alpha_4 + 2 \alpha_5 = d
			\right\}.
		\end{equation}
	\end{subequations}
	In particular,
	\begin{subequations}\label{e:h-polys}
		\begin{align}
			h_0(X,y) 
			= 
			&\;c_{0,1}
			\\
			h_1(X,y)
			=
			&\;c_{1,1} X_{11} +
			c_{1,2} X_{21} +
			c_{1,3} X_{12} +
			c_{1,4} X_{22}
			\\
			h_2(X,y)
			=
			&\;c_{2,1} y +
			c_{2,2} X_{11}^2 +
			c_{2,3} X_{11} X_{21} +
			c_{2,4} X_{11} X_{12} +
			c_{2,5} X_{11} X_{22} +
			c_{2,6} X_{21}^2
			\nonumber \\
			&+
			c_{2,7} X_{12} X_{21} +
			c_{2,8} X_{21} X_{22} +
			c_{2,9} X_{12}^2 +
			c_{2,10} X_{12} X_{22} +
			c_{2,11} X_{22}^2
		\end{align}
	\end{subequations}
	for some real coefficients $c_{i,j}$. Next, we determine all but one of these coefficients using the assumption that $g$ be convex.
	
	\paragraph{Step 2: Determining $h_0$, $h_1$, $h_2$.}
	We compute the hessian $\nabla^2 g$ and evaluate it at $X=0$ to find that
	\begin{equation*}
		\nabla^2 g(0,y) = \sum_{k=0}^p A_k y^k
	\end{equation*}
	for suitable $5 \times 5$ coefficient matrices $A_k$. In particular, the matrices $A_0$ and $A_1$ can be computed explicitly because they are fully determined by the polynomials $h_0,\ldots,h_4$, which contain a finite (albeit large) number of terms. One finds that
	\begin{equation*}
		A_0 = 
		\begin{pmatrix}
			0 & 0 & 0 & -c_{0,1} & c_{1,1} \\
			0 & 0 & c_{0,1} & 0 & c_{1,2} \\
			0 & c_{0,1} & 0 & 0 & c_{1,3} \\
			-c_{0,1} & 0 & 0 & 0 & c_{1,4} \\
			c_{1,1} & c_{1,2} & c_{1,3} & c_{1,4} & 2 c_{2,1} \\
		\end{pmatrix}
	\end{equation*}
	and
	\begin{equation*}
		A_1 = 
		\begin{pmatrix}
			2c_{2,2} & c_{2,3} & c_{2,4} & c_{2,5} - c_{2,1} & \ast \\
			c_{2,3} & 2c_{2,6} & c_{2,1} + c_{2,7} & c_{2,8} & \ast \\
			c_{2,4} & c_{2,1} + c_{2,7} & 2c_{2,9} & c_{2,10} & \ast \\
			c_{2,5} - c_{2,1} & c_{2,8} & c_{2,10} & 2c_{2,11} & \ast \\
			\ast & \ast & \ast & \ast & \ast
		\end{pmatrix},
	\end{equation*}
	where $\ast$ denotes entries that are not used below.

	Now, since $g$ is assumed to be convex, we must have $\nabla^2 g(0,y) \succeq 0$ for all $y$. For $y=0$ we obtain $\nabla^2 g(0,0) = A_0 \succeq 0$, which requires
	\begin{equation}\label{e:h-coeff-1}
		c_{0,1}=c_{1,1}=\cdots = c_{1,4}=0 \quad\text{and}\quad c_{2,1}\geq 0.
	\end{equation}
	In particular, the $4\times 4$ top-left block of $A_0$ must vanish. But then, the same block of the positive semidefinite matrix polynomial $\nabla^2 g(0,y)$ cannot have linear terms in $y$, so the $4\times 4$ top-left block of $A_1$ must vanish. This gives
	\begin{subequations}
		\label{e:h-coeff-2}
		\begin{gather}
			c_{2,2}=c_{2,3}=c_{2,4}=c_{2,6}=c_{2,8}=c_{2,9}=c_{2,10}=c_{2,11}=0,\\
			c_{2,5}=c_{2,1},\\
			c_{2,7}=-c_{2,1}.
		\end{gather}
	\end{subequations}
	Substituting \cref{e:h-coeff-1,e:h-coeff-2} into \cref{e:h-polys} yields
	\begin{equation}
		h_0(X,y) 
		= 
		0,
		\quad
		h_1(X,y)
		=0,
		\quad\text{and}\quad
		h_2(X,y)
		=
		c_{2,1} \left( y + \det X \right).
	\end{equation}

	\paragraph{Step 3: Deriving a contradiction.}
	We now recompute $\nabla^2 g(X,y)$ using $h_0=h_1=0$ and $h_2(X,y) = c_{2,1} \left( y + \det X \right)$, and evaluate the resulting polynomial matrix at
	\begin{equation*}
		X = \begin{pmatrix} z & 0 \\ 0 & 2z \end{pmatrix}
		\quad\text{and}\quad
		y=0.
	\end{equation*}
	This results in a polynomial matrix with indeterminate $z$, whose $4 \times 4$ top-left block has the form
	$H(z) = z^2 B + \text{higher degree terms}$. The matrix $B$ is
	\begin{equation*}
		B = \begin{pmatrix}
			4 - 8c_{2,1} & 0 & 0 & - 8c_{2,1} - 14 \\
			0 & 12 & 4c_{2,1} + 10 & 0 \\
			0 & 4c_{2,1} + 10 & 12 & 0 \\
			- 8c_{2,1} - 14 & 0 & 0 & 28 - 2c_{2,1}
		\end{pmatrix} .
	\end{equation*}
	Now, since $g$ is SOS-convex by assumption, the polynomial matrix $H(z)$ must be positive semidefinite. This requires $B\succeq 0$, which in turn requires the nonnegativity of the minor
	\begin{equation*}
		\left( 4-8c_{2,1} \right)\left( 28-2c_{2,1} \right) - \left( 8c_{2,1}+14 \right)^2
		= - 48 c_{2,1}^2 - 456 c_{2,1} - 84.
	\end{equation*}
	Since $c_{2,1}\geq 0$ by \cref{e:h-coeff-1}, however, this minor is strictly negative.
	This is the contradiction we need to conclude that the function $f(X)=|X|^2(|X|^2-2\det X)$ is not lifted SOS-polyconvex.
\end{proof}

\section{Computing the polyconvex envelope}\label{sec:envelope}

We now turn our attention to computing the polyconvex envelope of non-polyconvex functions that are polynomial on their effective domain. 
The polyconvex envelope of $f$, denoted by $\fpc:\R^{m\times n}\to\R\cup\{\pm\infty\}$, 
is the pointwise supremum of all polyconvex lower bounds of $f$.
That is, for every $X\in\R^{m\times n}$ we 
have $\fpc(X):=\sup_h h(X)$ where the supremum is over all polyconvex 
functions $h:\R^{m\times n} \to \R\cup \{+\infty\}$ such that $h(X)\leq f(X)$ for all  $X\in\R^{m\times n}$.

\subsection{Polyconvexification as a linear minimization over measures}\label{sec:linear}
We begin by recalling that if $f:\R^{m\times n}\to\R\cup\{\pm\infty\}$ is lower semicontinuous and bounded below, then its polyconvex envelope can be evaluated at any fixed matrix $X \in \R^{m\times n}$ by solving a linear minimization problem over a particular subset of probability measures on $\R^{m\times n}$.

In what follows, $\Pr(\R^{m\times n})$ denotes the set of Borel probability measures on $\R^{m \times n}$, meaning Borel measures on $\R^{m \times n}$ such that $\int d\mu= 1$. Recall that a Borel measure on $\R^{m\times n}$ is a nonnegative and countably additive function on the Borel $\sigma$-algebra of $\R^{m\times n}$, that is, the smallest collection of subsets of the space that contains the open sets and is closed under countable unions and complementation.

\begin{theorem}\label{th:pc-measure-lp}
	Suppose $f:\R^{m\times n} \to \R\cup\{+\infty\}$ is bounded below. Then, for every $X\in\R^{m\times n}$,
	\begin{equation}\label{primal}
		\fpc(X) = \inf_{\substack{\mu \in \Pr(\R^{m \times n}) \\ \int p\, d\mu = p(X)}} \int f\, d\mu.
	\end{equation}   
\end{theorem}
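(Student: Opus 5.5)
The plan is to prove the two inequalities separately. Write $G(X)$ for the right-hand side of \eqref{primal}. For the inequality $G \le \fpc$, I will show directly that $G$ is itself a polyconvex lower bound of $f$. For the reverse inequality, I will use Jensen's inequality applied to an arbitrary polyconvex minorant of $f$. Throughout, I take $f$ Borel measurable, so that $\int f\,d\mu \in \R\cup\{+\infty\}$ is well defined; this integral never equals $-\infty$ because $f$ is bounded below. The constraint $\int p\,d\mu = p(X)$ is understood to include the requirement that every minor is $\mu$-integrable.

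\emph{Step 1: $G \le \fpc$.} For $z\in\R^N$ define
\[
\tilde g(z) \ceq \inf\Big\{ \int f\,d\mu : \mu\in\Pr(\R^{m\times n}),\ \int p\,d\mu = z\Big\} \in \R\cup\{+\infty\},
\]
with the convention $\inf\emptyset=+\infty$. Since $f\ge c$ for some constant $c$, we get $\tilde g \ge c > -\infty$.

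First, $\tilde g$ is convex. Given $z_1,z_2$, $t\in[0,1]$, and admissible measures $\mu_1,\mu_2$ for $z_1,z_2$, the mixture $t\mu_1+(1-t)\mu_2$ is admissible for $tz_1+(1-t)z_2$. Its cost is $t\int f\,d\mu_1 + (1-t)\int f\,d\mu_2$, and taking infima gives $\tilde g(tz_1+(1-t)z_2)\le t\tilde g(z_1)+(1-t)\tilde g(z_2)$.

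Second, $G = \tilde g\circ p$ by definition, so $G$ is polyconvex in the sense of Definition~\ref{def:polyconvex}.

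Third, testing with the Dirac measure $\delta_X$ gives $G(X)\le f(X)$.

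Hence $G$ is a polyconvex minorant of $f$, and $G\le \fpc$ pointwise.

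\emph{Step 2: $\fpc \ge G$ is false direction; we need $\fpc \le G$.} Fix a polyconvex $h\le f$, written as $h=g\circ p$ with $g:\R^N\to\R\cup\{+\infty\}$ convex. Fix an admissible $\mu$ with $\int f\,d\mu<\infty$; if no such $\mu$ exists, then $G(X)=+\infty$ and there is nothing to prove. Let $\nu = p_\#\mu$ be the pushforward measure on $\R^N$. Then $\nu$ has barycenter $p(X)$, and $\int g\,d\nu \le \int f\,d\mu<\infty$ (reading the left side as a lower integral if $g$ is not Borel). The required inequality is Jensen's inequality
\[
g(p(X)) \le \int g\,d\nu .
\]
Combining it with $\int g\,d\nu \le \int f\,d\mu$ and taking the infimum over $\mu$ and then the supremum over $h$ yields $\fpc(X)\le G(X)$.

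\emph{Main obstacle: Jensen for a possibly non-lower-semicontinuous, extended-valued convex $g$.} Since $\int g\,d\nu<\infty$, the measure $\nu$ is concentrated on the convex set $C=\dom g$. I will argue by induction on the dimension of the affine hull of $C$.

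If the barycenter $b=p(X)$ lies in the relative interior of $C$, then $g$ is finite there and admits an affine minorant $\ell$ on $\operatorname{aff}C$ that is exact at $b$ (a relative subgradient). Integrating $g\ge \ell$ against $\nu$ gives $\int g\,d\nu \ge \ell(b) = g(b)$.

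If instead $b$ lies on the relative boundary of $C$, take a supporting hyperplane to $C$ at $b$ with affine function $a\ge 0$ on $C$ and $a(b)=0$. Then $\int a\,d\nu = a(b) = 0$ together with $a\ge0$ $\nu$-a.e. forces $\nu$ to be concentrated on the face $C\cap\{a=0\}$, which has strictly smaller dimension. Restricting $g$ to that face and applying the induction hypothesis completes the argument.

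The base case is dimension $0$, where $\nu$ is a Dirac mass and the inequality is trivial. This induction is the step I expect to require the most care, particularly the bookkeeping of measurability and the handling of $+\infty$ values.
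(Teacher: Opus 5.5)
Your proposal is correct and follows the same route as the paper. The value function on $\R^N$ is convex by mixing admissible measures, $\delta_X$ gives the upper bound by $f$, and Jensen's inequality applied to an arbitrary polyconvex minorant $h=\theta\circ p$ gives the reverse inequality. The only addition is your dimension-induction proof of Jensen for extended-valued, possibly non-lower-semicontinuous convex functions, which the paper uses without comment. In that induction, your case split should also rule out $b\notin\operatorname{cl}C$, and the same separation argument does this immediately.
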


\begin{remark}
	This result restates Dacorogna's formula for the polyconvex envelope~\cite[Theorem~5.6]{dacorogna08} using general probability measures instead  of atomic ones. It is not new: versions of problem \cref{primal} have appeared, for example, in \cite[Eq.~(6.2)]{Firoozye1991} and in~\cite[Eq.~(35)]{bk11}.
\end{remark}

\begin{proof}
	The proof follows the same arguments in ~\cite[Sect.~5.2.3]{dacorogna08}. Let $\phi(X)$ denote the right-hand side of \eqref{primal}. We need to show that $\phi$ is the largest polyconvex lower bound on on $f$. 
	The inequality $\phi(X) \leq f(X)$ for every matrix $X$ follows because the atomic measure $\mu = \delta_X$ is feasible for the minimization defining $\phi(X)$. (Note that the both sides may be infinite.)
	
	To show that $\phi$ is polyconvex, note that $\phi(X)=\psi(p(X))$ where $\psi:\R^N \to \R\cup\{+\infty\}$ is given by
	\begin{equation*}
		\psi(q) := \inf_{\substack{\mu \in \Pr(\R^{m \times n}) \\ \int p\, d\mu = q}} \int f\, d\mu.
	\end{equation*}
	This function is convex, which implies the polyconvexity of $\phi$. To see this, we fix arbitrary $q_1,q_2 \in \R^N$ and $\lambda \in [0,1]$ and verify that
	\begin{equation}\label{e:jensen}
		\psi(\lambda q_1 + (1-\lambda)q_2) \leq \lambda \psi(q_1) + (1-\lambda) \psi(q_2).
	\end{equation}
	This inequality holds trivially if either term on the right-hand side is infinite. If they are finite, then for any $\varepsilon>0$ there exist probability measures $\mu_1$ and $\mu_2$ satisfying
	$\int f d\mu_i \leq \psi(q_i) + \varepsilon$ for $i\in\{1,2\}$.
	Since the convex combination $\lambda\mu_1 + (1-\lambda)\mu_2$ is feasible for the minimization defining $\psi(\lambda q_1 + (1-\lambda)q_2)$, we find that
	$\psi(\lambda q_1 + (1-\lambda)q_2) \leq \lambda \psi(q_1) + (1-\lambda) \psi(q_2) + \varepsilon$. Inequality \eqref{e:jensen} follows because $\varepsilon$ is arbitrary. 

	Finally, we show that if $h$ is a polyconvex lower bound for $f$, then $h\leq \phi$. For this, write $h = \theta \circ p$ for a convex function $\theta$. The inequality $h\leq f$ implies that $\int f\, d\mu \geq \int \theta \circ p\, d\mu$ for every probability measure $\mu$, so for every matrix $X$ we find that
	\begin{equation*}
		\phi(X)
		\geq 
		\inf_{\substack{\mu \in \Pr(\R^{m \times n}) \\ \int p\, d\mu = p(X)}} \int \theta\circ p \, d\mu
		\geq \inf_{\substack{\mu \in \Pr(\R^{m \times n}) \\ \int p\, d\mu = p(X)}} \theta\left( \int p\, d\mu \right) 
		= \theta(p(X)) 
		= h(X).
		\qedhere
	\end{equation*}
\end{proof}

\subsection{Duality and lower semicontinuity of the polyconvex envelope}\label{sec:duality}

Next, we show that if $f$ satisfies a suitable coercivity condition, then its polyconvex envelope can be evaluated by solving a linear problem over nonnegative functions dual to the measure-theoretic minimization from Theorem~\ref{th:pc-measure-lp}.

\begin{theorem}\label{th:duality-improved}
	Let $f:\R^{m \times n} \to \R \cup \{+\infty\}$ be a lower semicontinuous function 
	such that $f(X_k)/|X_k|^{\min(m,n)}\to +\infty$ whenever $|X_k|\rightarrow \infty$.
	Then for every $X\in\dom f$,
	\begin{equation}\label{dual}
		\fpc(X) = \sup_{(\dualu,\dualv)\in\R^N\times\R}
		\dualv + \langle \dualu, p(X) \rangle
		\quad\text{s.t.}\quad 
		f(Z) - \dualv - \langle \dualu, p(Z) \rangle\geq 0 
		\quad \forall Z \in \dom f.
	\end{equation}
\end{theorem}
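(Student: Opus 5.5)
Weak duality comes first. For any feasible $(\dualu,\dualv)$ and any $\mu\in\Pr(\R^{m\times n})$ with $\int p\,d\mu=p(X)$ and $\int f\,d\mu<\infty$, the measure $\mu$ is supported on $\dom f$ up to a null set. Integrating the constraint therefore gives $\int f\,d\mu\ge \dualv+\ip{\dualu}{p(X)}$, and Theorem~\ref{th:pc-measure-lp} yields $\fpc(X)\ge$ the right-hand side of \eqref{dual}. For the reverse inequality, the right-hand side of \eqref{dual} equals $\sup\{\dualv+\ip{\dualu}{p(X)}\}$ over affine minorants $\ell(\xi)=\dualv+\ip{\dualu}{\xi}$ of $\psi$, the convex function from the proof of Theorem~\ref{th:pc-measure-lp}. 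Indeed, $\ell\circ p\le f$ on $\dom f$ if and only if $\ell\le\psi$, by Jensen and the fact that $\psi$ is the infimum over measures. This supremum is the closed convex hull (biconjugate) $\psi^{**}(p(X))$. So it suffices to show $\psi^{**}(p(X))=\psi(p(X))$, that is, that $\psi$ is proper, convex and lower semicontinuous at $p(X)$. Convexity is already known. Properness holds because $f$ is bounded below (by coercivity and lower semicontinuity) and $\psi(p(X))\le f(X)<\infty$ for $X\in\dom f$. By the Fenchel--Moreau theorem we are then reduced to lower semicontinuity of $\psi$ on $\R^N$.

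The main step is lower semicontinuity of $\psi$, which I will prove with a tightness/compactness argument on measures. Take $\xi_k\to\xi$ with $\psi(\xi_k)\to L<\infty$, and pick $\mu_k$ with $\int p\,d\mu_k=\xi_k$ and $\int f\,d\mu_k\le\psi(\xi_k)+1/k$. Since $f\ge c$ and, by coercivity, $f(Z)\ge |Z|^{m\wedge n}\,\omega(|Z|)-C$ with $\omega\to\infty$, the quantities $\int |Z|^{m\wedge n}\omega(|Z|)\,d\mu_k$ are uniformly bounded. Hence $(\mu_k)$ is tight, and by Prokhorov a subsequence converges narrowly to some $\mu\in\Pr(\R^{m\times n})$. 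Every minor has degree at most $m\wedge n$, so $|p(Z)|\le C(1+|Z|^{m\wedge n})$. The superlinear bound on $|Z|^{m\wedge n}$ gives uniform integrability of $p$ with respect to $(\mu_k)$, hence $\int p\,d\mu=\lim\int p\,d\mu_k=\xi$. This is exactly where the growth exponent $\min(m,n)$ is used, and I expect it to be the crux.

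It remains to check $\int f\,d\mu\le\liminf\int f\,d\mu_k$, which is the portmanteau inequality for the lower semicontinuous function $f$ bounded below. It follows by approximating $f$ from below with bounded continuous (e.g.\ Lipschitz) functions, and it holds even though $f$ may take the value $+\infty$. Hence $\psi(\xi)\le L$, which is lower semicontinuity. The infimum defining $\psi$ is then attained whenever it is finite, and $\psi$ is a proper lsc convex function. Fenchel--Moreau gives $\psi=\psi^{**}$ everywhere. Evaluating at $p(X)$ and combining with Theorem~\ref{th:pc-measure-lp} yields \eqref{dual}.

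One technical point needs care. If $\psi\equiv+\infty$ off a set, the biconjugate is still correct for proper lsc convex functions. Properness fails only if $\psi$ takes the value $-\infty$, and that is excluded by $f\ge c$. So no separate Slater-type argument is needed.
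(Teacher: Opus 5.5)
Your proposal is correct, but it reaches strong duality by a different route than the paper. The paper keeps the problem in Lagrangian form. It applies the Brezis--Nirenberg--Stampacchia minimax theorem to $\Phi(\mu,\dualu)=\int f-\ip{\dualu}{p}\,d\mu+\ip{\dualu}{p(X)}$ on $\mathcal{X}\times\R^N$, where $\mathcal{X}$ is a convex set of probability measures in the space of signed measures with the weak topology. The hypotheses it checks are:
\begin{itemize}
\item weak lower semicontinuity of $\mu\mapsto\int\varphi_{\dualu}\,d\mu$, via truncation and the portmanteau theorem;
\item compactness of the single sublevel set $\{\mu:\int f\,d\mu\le 1+f(X)\}$ at $\dualu_0=0$, via tightness and Prokhorov.
\end{itemize}
The superlinear growth enters there because every $\varphi_{\dualu}=f-\ip{\dualu}{p}$ is bounded below with compact sublevel sets. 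This is needed for integrability, for the lower semicontinuity in $\mu$, and to collapse the inner infimum over measures to an infimum over points.

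You instead stay in the finite-dimensional space $\R^N$. You identify feasible pairs $(\dualu,\dualv)$ with affine minorants of the value function $\psi$. You then prove $\psi$ is lower semicontinuous using Prokhorov, uniform integrability of $p$ (exactly where the exponent $\min(m,n)$ is used), and portmanteau for $f$, and finish with Fenchel--Moreau. The analytic ingredients are the same, but your framework is more elementary and gives more for free:
\begin{itemize}
\item attainment of the infimum in \eqref{primal} whenever it is finite;
\item lower semicontinuity of $\fpc=\psi\circ p$ on all of $\R^{m\times n}$, which strengthens the paper's corollary beyond $\dom f$;
\item the duality formula at every $X$, not only $X\in\dom f$.
\end{itemize}
It also sidesteps the paper's restriction to measures with $\supp\mu\subset\dom f$. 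The paper's weak closedness of $\mathcal{K}$ relies on the assertion that $\dom f$ is closed, which lower semicontinuity alone does not give. You only use that $\int f\,d\mu<\infty$ forces $\mu$ to be concentrated on $\dom f$. What the paper's route offers in exchange is the explicit inf--sup interchange, which mirrors the moment/SOS primal--dual pair used in the later sections.
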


\begin{remark}
	The optimal value of the dual maximization problem in \cref{dual} is a lower bound on $\fpc(X)$ for every $f$.
	This is because the function $Z \mapsto \dualv + \langle \dualu, p(Z) \rangle$ is a polyconvex lower bound on $f$ for every feasible choice of $\dualu\in\R^N$ and $\dualv\in\R$. The continuity and coercivity assumptions on $f$ in Theorem~\ref{th:duality-improved} guarantee that the lower bound is sharp.
\end{remark}

\begin{proof}
	Since the entries of $p(X)$ are minors of $X$, they are polynomial of degree at most $\min(m,n)$. Then, 
	for every $\dualu\in \R^N$, the lower semicontinuity and coercivity assumptions on $f$ ensure that the 
	function
	\begin{equation*}
		\begin{array}{llll}
			\varphi_{\dualu}:&\R^{m\times n} & \to & \R \\
			&Z& \mapsto & f(Z) - \langle \dualu, p(Z)\rangle
		\end{array}
	\end{equation*}
	has compact sublevel sets. In particular, $\varphi_{\dualu}$ is bounded below and attains 
	a minimum. It follows that $\varphi_{\dualu}$ is $\mu$-integrable for every probability 
	measure $\mu$ in the set
	\[          
	\mathcal{X}:=\left\{\mu\in\Pr(\R^{m \times n}) 
	\colon 
	\supp\mu\subset \dom f \text{ and } f \text{ is $\mu$-integrable}
	\right\}.
	\]
	Without loss of generality, we assume that $f$ is nonnegative, as the statement of
	Theorem~\ref{th:duality-improved} is invariant under adding finite constants to $f$.
	\par
	We claim the following chain of equalities:
	\begin{subequations}
		\begin{align}
			\fpc(X) 
			&= 
			\inf_{\substack{\mu \in \mathcal{X} \\ \int p \, d\mu = p(X)}} 
			\int f \, d\mu
			\label{e:minimax-1}
			\\
			&= 
			\adjustlimits
			\inf_{\mu \in \mathcal{X}} 
			\sup_{\dualu \in \R^N}
			\int f - \langle \dualu, p \rangle \, d\mu + \langle \dualu, p(X) \rangle
			\label{e:minimax-2}
			\\
			&= 
			\adjustlimits
			\sup_{\dualu \in \R^N}
			\inf_{\mu \in \mathcal{X}} 
			\int f - \langle \dualu, p \rangle \, d\mu + \langle \dualu, p(X) \rangle
			\label{e:minimax-3}
			\\
			&= 
			\adjustlimits
			\sup_{\dualu \in \R^N}
			\inf_{Z \in \dom f}
			\{f(Z) - \langle \dualu, p(Z)\rangle \} + \langle \dualu, p(X) \rangle
			\label{e:minimax-4}
			\\
			&= 
			\sup_{\substack{\dualu\in \R^N\\ \dualv \in \R\\}}
			\left\{v + \langle \dualu, p(X) \rangle :\; \dualv \leq f(Z) - \langle \dualu, p(Z)\rangle
			\quad \forall Z\in\R^{m\times n}\right\}.
			\label{e:minimax-5}
		\end{align}
	\end{subequations}
	
	The equality in \cref{e:minimax-1} is a restatement of Theorem~\ref{th:pc-measure-lp}, the equality in \cref{e:minimax-2} is true because the inner maximization is infinite unless $\int p \, d\mu = p(X)$, and the equality in \cref{e:minimax-5} is immediate from the definition of the infimum $\inf_{Z \in \dom f}\{f(Z) - \langle \dualu, p(Z)\rangle \}$ for every $u\in\R^N$.
	
	To prove the equality in \cref{e:minimax-4}, recall that $ \varphi_{\dualu}$ 
	attains a minimum at some matrix $Z_{\dualu}$. From the inequality 
	$\varphi_{\dualu}(Z_{\dualu})\leq \varphi_{\dualu}(Z)$ we conclude that 
	$\varphi_{\dualu}(Z_{\dualu})\leq \inf_{\mu \in \mathcal{X}} \int \varphi_{\dualu} \,d\mu$. The opposite 
	inequality  $\inf_{\mu \in \mathcal{X}} \int \varphi_{\dualu} \,d\mu \leq  \varphi_u(Z_\dualu)$ is true 
	by choosing $\mu=\delta_{Z_{\dualu}}$, so the claimed equality holds.
	
	There remains to establish the equality in \cref{e:minimax-3}. We do this by applying a minimax theorem of Brezis, Stampacchia and Nirenberg \cite{Brezis1972} to the function $\Phi:\mathcal{X}\times \R^N \to \R$ defined by
	\begin{equation*}
		\Phi(\mu,\dualu):= \int f - \langle \dualu, p \rangle \, d\mu + \langle \dualu, p(X) \rangle.
	\end{equation*}
	We use a version of the theorem stated in \cite[Theorem~5.2.2]{Nirenberg2001}, which if 
	$\dualu$ belongs to a finite-dimensional space requires checking the following conditions:
	\begin{enumerate}[(BNS1), noitemsep, topsep=0pt, labelwidth=\widthof{(BNS4)}, leftmargin=!]
		\item\label{bns-0} $\mathcal{X}$ is a convex subset of a topological vector space.
		\item\label{bns1} For every $\mu\in \mathcal{X}$, the function $\dualu\mapsto \Phi(\mu,\dualu)$ is upper semicontinuous on $\R^N$ and has convex superlevel sets.
		\item\label{bns2} For every $\dualu \in \R^N$, the function $\mu\mapsto \Phi(\mu,\dualu)$ is lower semicontinuous and on $\mathcal{X}$ and has convex sublevel sets.
		\item\label{bns3} There exists $\dualu_0 \in \R^N$ and $\kappa > \sup_{\R^N} \inf_{\mathcal{X}} \Phi$ such that the set $\{\mu\in\mathcal{X}:\;\Phi(\mu,\dualu_0) \leq \kappa \}$ is compact.
	\end{enumerate}
	
	Condition~\ref{bns-0} holds because $\mathcal{X}$ is a convex subset of the space of Borel signed
	measures on $\R^{m\times n}$ endowed with the weak topology. This is the locally convex topology 
	of the dual pair of the space of Borel signed measures and the space $C_b(\R^{m\times n})$ of bounded 
	continuous real functions with the bilinear form $(\mu,f)\mapsto\int f d\mu$. 
	
	Condition~\ref{bns1} holds because the function $u\mapsto \Phi(\mu,\dualu)$ is linear for every $\mu \in \mathcal{X}$.
	
	Next, we verify condition~\ref{bns2}. First, for fixed $\dualu\in\R^N$ the function 
	$\Phi(\cdot,\dualu)$ has convex sublevel sets because it is linear. To prove that it 
	is weakly lower semicontinuous, recall that $\varphi_{\dualu}$ is bounded below 
	and lower semicontinuous. Then, for every $k\in\N$, the function 
	$\varphi_{\dualu}^k: \R^{m\times n} \to\R$, $Z\mapsto\min(k,\varphi_{\dualu}(Z))$ 
	is bounded and lower semicontinuous. It follows from the Portmanteau theorem
	\cite[Theorem~8.1]{Topsoe1970}, that 
	$\Phi_k(\mu):=\int \varphi_{\dualu}^k d\mu$ is a weakly lower semicontinuous map on 
	the convex cone of finite nonnegative Borel measures on $\R^{m\times n}$. Clearly, 
	the pointwise supremum $\sup_k\Phi_k$ is weakly lower semicontinuous on the same 
	domain and it restricts to a weakly lower semicontinuous function on $\mathcal{X}$, 
	which coincides with $\Phi(\cdot,\dualu)$ by monotone convergence.
	\par
	To conclude, we verify condition \ref{bns3}. As $f$ is nonnegative, the number 
	$\kappa := 1 + f(X)$
	is strictly positive. It also satisfies the strict inequality 
	$\kappa > \sup_{\dualu}\inf_{\mu}\Phi(\mu,\dualu)$ because of 
	the inequality $f(X)\geq\fpc(X)$,
	the equalities \cref{e:minimax-1}--\cref{e:minimax-2},
	and the elementary inequality 
	$\inf_{\mu}\sup_{\dualu}\Phi(\mu,\dualu)\geq\sup_{\dualu}\inf_{\mu}\Phi(\mu,\dualu)$. 
	We now choose $\dualu_0=0$ and verify that the set
	\[
	\mathcal{K}:=\left\{\mu\in\mathcal{X}:\;\Phi(\mu,0)= \int f d\mu \leq \kappa \right\}
	\] 
	is weakly closed and tight, hence weakly compact by Prokhorov's theorem 
	\cite{Billingsley1999}. 
	
	Weak closedness follows from the identity  $\mathcal{K}=\left\{\mu\in\Pr(\dom f):\;\int fd\mu\leq \kappa \right\}$ because the set on the right hand side is weakly closed. To see this, note that $\dom f$ is closed, so the Portmanteau theorem implies that $\Pr(\dom f)$ is a weakly closed subset of $\Pr(\R^{m\times n})$, which in turn is a weakly closed set in the space of signed measures on $\R^{m\times n}$ \cite{Varadarajan1958}. Then, $\left\{\mu\in\Pr(\dom f):\;\int fd\mu\leq \kappa \right\}$ is weakly closed because for $u=0$ we have $\int fd\mu = \sup_k\Phi_k(\mu)$ and, as already shown above, this supremum is weakly lower semicontinuous on the space of finite nonnegative measures on $\R^{m\times n}$.
	
	To show tightness, we need to verify that for every $\varepsilon > 0$ there exists a compact set 
	$S_\varepsilon$ in $\R^{m\times n}$ such that 
	$\mu(\R^{m\times n}\setminus S_\varepsilon) \leq \varepsilon$ for all $\mu \in \mathcal{K}$.  
	The sublevel set $S_\varepsilon:=\{Z\in\R^{m\times n}\colon f(Z)\leq \frac{\kappa}{\varepsilon}\}$ 
	is a suitable choice because it is compact and 
	$\mu(\R^{m\times n}\setminus S_\varepsilon)
	\leq\frac{\varepsilon}{\kappa}\int fd\mu
	\leq\varepsilon
	$ for all $\mu\in\mathcal{K}$.
\end{proof}
Finally, we show that lower semicontinuity and polynomial growth of a function $f$ are inherited by its polyconvex envelope $\fpc$. These properties are highly desirable in calculus of variation problems.
\begin{corollary}
	Let $f:\R^{m \times n} \to \R \cup \{+\infty\}$ be a lower semicontinuous function 
	such that $f(X_k)/|X_k|^{\min(m,n)}\to +\infty$ whenever $|X_k|\rightarrow \infty$.
	Then $\fpc$ restricted to $\dom f$ is lower semicontinuous. If there are reals 
	$r,c_1,c_2$ such that $r>\min(m,n)$, $c_1>0$, and 
	\[
	f(X)\geq c_1(|X|^r+c_2)
	\quad
	\text{for every $X\in\R^{m\times n}$,}
	\]
	then $\fpc(X)\geq c_1(|X|^r+c_2)$ for every $X\in\R^{m\times n}$ and $\fpc$ attains 
	a minimum on $\dom f$.
\end{corollary}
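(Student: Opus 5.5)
The plan is to read everything off the dual representation in Theorem~\ref{th:duality-improved}. By that theorem, for every $X\in\dom f$ we have $\fpc(X)=\sup_{(\dualu,\dualv)}\{\dualv+\ip{\dualu}{p(X)}\}$, the supremum being over pairs feasible for \cref{dual}. The feasible set does not depend on $X$. Hence on $\dom f$ the envelope $\fpc$ is the pointwise supremum of the family of functions $X\mapsto \dualv+\ip{\dualu}{p(X)}$. Each of these is continuous, because $p$ is polynomial. A pointwise supremum of continuous functions is lower semicontinuous, so $\fpc|_{\dom f}$ is lower semicontinuous. This settles the first claim.

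For the growth bound, I will use the fact that $X\mapsto c_1(|X|^r+c_2)$ is convex, since $r>\min(m,n)\ge1$. It is therefore polyconvex: it equals $\theta\circ p$, where $\theta$ applies the convex function to the entries of $p$ that list $X$ itself. It is also a lower bound on $f$. Since $\fpc$ is the largest polyconvex lower bound of $f$, we get $\fpc(X)\ge c_1(|X|^r+c_2)$ for every $X$. One could also derive this from Theorem~\ref{th:pc-measure-lp} together with Jensen's inequality, as in the last step of that proof.

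For attainment of the minimum, I note first that the growth hypothesis with $r>\min(m,n)$ gives the coercivity assumption $f(X_k)/|X_k|^{\min(m,n)}\to\infty$. So the first part applies, and $\fpc$ is lower semicontinuous on $\dom f$. Next I need $\dom f$ to be closed. This holds because $f$ is lower semicontinuous and coercive: the sublevel sets $\{f\le t\}$ are compact, and $\dom f$ is their union over $t$. The union is not obviously closed, so this is the delicate point. I would handle it by working instead on a single compact set. Take $X_0\in\dom f$, assuming $\dom f\neq\emptyset$ (otherwise the statement is vacuous). Then $\fpc(X_0)\le f(X_0)<\infty$, and by the growth bound every $X$ with $\fpc(X)\le f(X_0)$ lies in the ball $B$ where $c_1(|X|^r+c_2)\le f(X_0)$.

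The main obstacle is that lower semicontinuity is only known on $\dom f$. A minimizing sequence $X_k\in\dom f\cap B$ converges, along a subsequence, to some $\bar X\in B$, but possibly $\bar X\notin\dom f$. I would first try to show that $\dom f$ is closed. If that fails, I would instead extend the lower semicontinuity argument to $\overline{\dom f}$ by a direct bound. For a limit point $\bar X$, fix a feasible pair $(\dualu,\dualv)$. Its value $\dualv+\ip{\dualu}{p(\bar X)}$ is the limit of $\dualv+\ip{\dualu}{p(X_k)}\le\fpc(X_k)$. Taking the supremum over feasible pairs shows that the dual value at $\bar X$ is at most $\liminf_k\fpc(X_k)$. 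To conclude, I must identify this dual value with $\fpc(\bar X)$ and show $\bar X\in\dom f$. I expect closedness of $\dom f$, which is implicitly used in the proof of Theorem~\ref{th:duality-improved}, to be the key ingredient. I would justify it by compactness of the sublevel sets, using the convention that the minimum is taken on $\dom f\cap B$ with $B$ compact.
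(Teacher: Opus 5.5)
Your first two steps match the paper's proof. Lower semicontinuity of $\fpc$ on $\dom f$ comes from the supremum of the continuous functions $X\mapsto \dualv+\ip{\dualu}{p(X)}$ given by Theorem~\ref{th:duality-improved}. The growth bound comes from the fact that $c_1(|X|^r+c_2)$ is a convex, hence polyconvex, lower bound of $f$.

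\textbf{The gap is in the attainment step.} You leave it open and plan to close it by showing $\dom f$ is closed ``by compactness of the sublevel sets.'' That cannot work, because $\dom f=\bigcup_t\{f\le t\}$ need not be closed under these hypotheses. Take $m=n=1$ and set $f(x)=1/x+x^2$ for $x>0$, $f(x)=+\infty$ for $x\le 0$. This $f$ is lower semicontinuous, satisfies $f(x)/|x|\to\infty$ and $f(x)\ge x^2$, yet $\dom f=(0,\infty)$. So a minimizing sequence in $\dom f\cap B$ may converge to a point outside $\dom f$, where you have neither lower semicontinuity nor the dual formula. Restricting to $\dom f\cap B$ does not help, since that set need not be compact either. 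The paper's one-line argument (lower semicontinuity on $\dom f$ plus the growth bound give compact sublevel sets) tacitly relies on the same closedness. The paper also asserts it (``note that $\dom f$ is closed'') in the proof of Theorem~\ref{th:duality-improved}. If you take closedness as a standing assumption, your argument is exactly the paper's.

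The missing idea that removes the difficulty is to minimize $f$ rather than $\fpc$.
\begin{itemize}
\item Since $f$ is lower semicontinuous and $f(X)\to+\infty$ as $|X|\to\infty$, its sublevel sets are compact. So, for $\dom f\neq\emptyset$, $f$ attains its minimum at some $X^*\in\dom f$.
\item The constant $f(X^*)$ is a polyconvex lower bound of $f$. Hence $\fpc(X)\ge f(X^*)\ge \fpc(X^*)$ for every $X\in\R^{m\times n}$.
\item Therefore $X^*\in\dom f$ minimizes $\fpc$. This needs neither lower semicontinuity of $\fpc$, nor the growth bound, nor closedness of $\dom f$.
\end{itemize}
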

\begin{proof}
	The lower semicontinuity of $\fpc$ restricted to $\dom f$ follows because $\fpc$ is the supremum of affine functions by Theorem~\ref{th:duality-improved}. The second assertion is 
	true because the function $X\mapsto c_1(|X|^r+c_2)$ is a polyconvex lower bound 
	to $f$ for every $r\geq 1$ and $c_1>0$. This and the lower semiconitinuity of 
	$\fpc$ restricted to $\dom f$ show that $\fpc$ has compact sublevel sets and 
	hence attains a minimum on $\dom f$.
\end{proof}

\subsection{Semidefinite approximations}
\label{sec:SDP-approx}

We now explain how the minimization problem in \eqref{primal} and its dual maximization 
problem in \eqref{dual} can be approximated by a hierarchy of SDPs 
under the following assumption.

\begin{assumption}\label{ass:polydata}
	The function $f:\R^{m\times n} \to \R \cup \{+\infty\}$ is polynomial on its effective domain. Moreover, $\dom f$ is a basic semialgebraic set, that is, there exist polynomials $g_1,\ldots,g_s$ such that 
	$$\dom f = \{X\in \R^{m \times n}:\; g_1(X)\geq 0,\ldots, g_s(X)\geq 0\}.$$
\end{assumption}

Thanks to this assumption, we can avoid discretizing \eqref{primal} into a finite-dimensional nonlinear program over the atoms and weights of atomic measures, as is standard in the numerical polyconvexification literature \cite{b05,mppw24}. Instead, we fix a polynomial basis to express $f$ and $\bp$ and view \eqref{primal} as a minimization problem over the integrals of the basis elements with respect to the probability measure $\mu$, that is, over the moments of $\mu$. From this perspective,  \eqref{primal} is a \emph{generalized moment problem} in the sense of Lasserre \cite{lasserre10,lasserre15}. The difficulty, of course, lies in describing the set of moments of nonnegative measures, called the \emph{moment cone}, but this can be achieved using standard techniques for polynomial optimization. The same techniques can be used to approximate the dual maximization problem in \eqref{dual}, where the challenge is to ensure the nonnegativity of the polynomial $f(Z)-v-\langle u, p(Z)\rangle$ on the semialgegbraic set $\dom f$.

\subsubsection{Moment-SOS hierarchy}\label{sec:mom-sos-envelope}

Let $d = \deg f$ be the degree of $f$.
Let $\N^{mn}_d:=\{\ba \in \N^{mn} : \sum_k a_k \leq d\}$, let $\R[Z]_d$ denote  the finite-dimensional vector space of polynomials of degree up to $d$ in the entries of $Z \in \R^{m\times n}$, and let $(b_{\ba}(Z))_{\ba \in \N^{mn}_d}$ be a basis for this space. Express the polynomials $$f(Z) = \sum_{\ba \in \N^{mn}_d} f_{\ba} b_{\ba}(Z)
\quad\text{and}\quad
\bp(Z) = \sum_{\ba \in \N^{mn}_d} \bp_{\ba} b_{\ba}(Z)$$
as linear functions of their coefficients $f_{\ba} \in \R$, ${\bp}_{\ba} \in \R^{\np}$. 
Given any sequence $\by=(y_{\ba})_{\ba \in \N^{mn}}$, we can define the linear functionals $f \mapsto \ell_{\by}(f):=\sum_{\ba} f_{\ba} y_{\ba}$ and $\bp \mapsto \ell_{\by}(\bp):=\sum_{\ba} \bp_{\ba} y_{\ba}$. Note that the functionals $\by\mapsto \ell_{\by}(f)$ and $\by\mapsto \ell_{\by}(p)$ are also linear.
Given a Borel measure $\mu$, its moment of degree $\ba$ is the real number
\begin{equation}\label{moment}
	y_{\ba} := \int b_{\ba}(Z) d\mu(Z).
\end{equation}

Let $d \in \N$ denote the degree of $f$. The cone of moments of degree up to $d$ on $\dom f$, denoted by $\mathscr{M}_d(\momsupport)$ and hereafter called simply the moment cone, is the set of vectors $(y_{\ba})_{\ba \in \N^{mn}_d}$ such that \eqref{moment} holds for some Borel measure $\mu$ supported on $\momsupport$. It is a convex cone.

Since $f$ and $\bp$ are polynomials,  problem \eqref{primal} can be reformulated as a  linear optimization problem over the moment cone, namely,
\begin{equation}\label{primalmoment}
	\fp(X) =\inf_{\by \in \mathscr{M}_d(\momsupport)}
	\ell_{\by}(f) \text{ s.t. } \ell_{\by}(1) = 1, \:\ell_{\by}(\bp) = \bp(X),
\end{equation}
where the constraint $\ell_{\by}(1) = 1$ indicates that $\by$ is the vector of moments of a probability measure.
The dual problem \eqref{dual} can of course be restated as
\begin{equation}\label{dual-v2}
	\fd(X) = \sup_{(\dualu,\dualv)\in\R^N\times\R} \dualv + \langle \dualu, \bp(X) \rangle \quad\mathrm{s.t.}\quad f - \dualv - \langle \dualu, \bp \rangle \in \mathscr{P}_d(\momsupport)
\end{equation}
where $\mathscr{P}_d(\momsupport)$ is the convex cone of nonnegative polynomials of degree up to $d$. This is the topological dual to $\mathscr{M}_d(\momsupport)$, see e.g. \cite[Thm. 8.1.2]{nie23}. While the cones $\mathscr{M}_d(\momsupport)$ and $\mathscr{P}_d(\momsupport)$ cannot be handled directly in computations, tractable semidefinite-representable outer approximations of $\mathscr{M}_d(\momsupport)$ and inner approximations of $\mathscr{P}_d(\momsupport)$ can be constructed in the following way. 

Fix $k \in \N$ such that $2k \geq \max\{\deg f, m, n\}$. Let $\Sigma_k[Z]$ be the convex cone of polynomials of $Z$ that can be written as sums of squares (SOS) of polynomials of degree $k$. The \emph{truncated quadratic module} of order $k$ associated to the basic semialgebraic set $\dom f$ is defined as
\[
\mathscr{Q}^k_d(\momsupport):=
\left\{p \in \R[Z]_d: 
p=\sigma_0 + \sum_{i=1}^s g_i\sigma_i, \; 
\sigma_0,\ldots,\sigma_s \in \Sigma_k[Z], \;
\deg(g_i \sigma_i)\leq 2k
\right\}.
\] 
It is a subset of $\mathscr{P}_d(\momsupport)$ and a convex cone. As such, it has a dual cone, which is called the \emph{pseudo-moment cone} and is given by
\begin{equation*}
	\begin{array}{lll}
		\mathscr{R}^k_d(\momsupport):=
		\big\{(y_{\ba})_{\ba \in \N^{mn}_d} : 
		&\ell_{\by}(\sigma_0) \geq 0 &\forall \sigma_0 \in \Sigma_k[Z], \\
		&\ell_{\by}(g_i \sigma_i) \geq 0 &\forall \sigma_i \in \Sigma_{k}[Z]\text{ s.t. }\deg(g_i \sigma_i)\leq 2k\big\}.
	\end{array}
\end{equation*}
Note that $\mathscr{M}_d(\momsupport) \subset \mathscr{R}^k_d(\momsupport)$ for all $k$.
Since the cone of sum-of-squares polynomials is semidefinite-representable, linear optimization problems over the pseudo-moment cone and the truncated quadratic module can be reformulated as semidefinite programs, for which powerful interior-point algorithms are available. We refer readers to \cite{lasserre10,lasserre15} for details of this reformulation. Motivated by these observations, we then relax problem \eqref{primalmoment} into the semidefinite-representable problem
\begin{equation}\label{mom}
	\fp^k(X) = \inf_{\by \in \mathscr{R}^k_d(\momsupport)} \ell_{\by}(f) \text{ s.t. } \ell_{\by}(1) = 1, \: \ell_{\by}(\bp) = \bp(X),
\end{equation}
and at the same time strengthen \cref{dual-v2} into the semidefinite-representable problem
\begin{equation}\label{sos}
	\fd^k(X) = \sup_{(\dualu,\dualv)\in\R^N\times\R} \dualv + \langle \dualu, \bp(X) \rangle \quad\mathrm{s.t.}\quad f - \dualv - \langle \dualu, \bp \rangle \in \mathscr{Q}^k_d(\momsupport).
\end{equation}
These two problems are dual to each other and, in particular, $\fd^k(X) \leq \fp^k(X)$. Both of these values are lower bound on the polyconvex envelope $\fpc(X)$ by construction. Theorem~\ref{convergence} below guarantees that they are nondecreasing in $k$ and, crucially, converge to $\fpc(X)$ if the additional assumption holds.

\begin{assumption}[Archimedean condition]\label{ass:archimedean}
	There exist $k' \in \R$ and  $\sigma_0,\ldots,\sigma_s \in \Sigma_{k'}[Z]$ such that the set $\{Z\in\R^{m \times n}:\; \sigma_0(Z) + g_1(Z) \sigma_1(Z) + \cdots + g_s(Z) \sigma_s(Z) \geq 0\}$ is compact.
\end{assumption}

\begin{remark}
	This assumption implies that $\dom f$ is compact, but is not equivalent to it (indeed, it depends explicitly on the polynomials used to define $\dom f$). If $\dom f$ is compact, then the assumption is mild because, in principle, one can add the inequality $g_{s+1}(Z) := c^2 - |Z|^2 \geq 0$ to the definition of $\dom f$ for a constant $c$ large enough not to change that set. Then, Assumption~\ref{ass:archimedean} holds with $k'=2$, $\sigma_0=\cdots=\sigma_s=0$, and $\sigma_{s+1}=1$.
\end{remark}

\begin{theorem}\label{convergence}
	For every $X \in \R^{m\times n}$ and every integer $k\geq 2d$,
	\begin{gather*}
		\fp^k(X) \leq \fd^k(X),\\
		\fp^{k}(X)\leq \fp^{k+1}(X)\leq \fpc(X), \\ 
		\fd^k(X)\leq\fd^{k+1}(X)\leq \fpc(X).
	\end{gather*}
	Moreover, if Assumption~\ref{ass:archimedean} holds, then
	$$\lim_{k\to\infty}\fp^{k}(X) = \lim_{k\to\infty}\fd^{k}(X)=\fpc(X).$$
\end{theorem}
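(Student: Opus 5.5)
The inequality $\fp^k(X)\le\fd^k(X)$, as printed, says that the moment relaxation \eqref{mom} has \emph{no duality gap} with the SOS strengthening \eqref{sos}. Ordinary weak duality only gives $\fd^k(X)\le\fp^k(X)$: for feasible $(\dualu,\dualv)$ and $\by$ we have $\ell_{\by}(f)-\dualv-\langle \dualu,\bp(X)\rangle=\ell_{\by}(f-\dualv-\langle\dualu,\bp\rangle)\ge0$, since $\by\in\mathscr{R}^k_d(\momsupport)$ is nonnegative on $\mathscr{Q}^k_d(\momsupport)$. So the plan is to prove that the two values coincide, from which the stated inequality is immediate. I take $X\in\dom f$; then \eqref{mom} is feasible, using the moments of $\delta_X$, and $\fp^k(X)\le f(X)<\infty$.

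\textbf{Step 1: no duality gap.} The two problems form a primal--dual pair of conic linear programs over the mutually dual cones $\mathscr{R}^k_d(\momsupport)$ and $\mathscr{Q}^k_d(\momsupport)$. Equality follows from the conic duality theorem if either of the following holds.
\begin{itemize}
\item The linear image $\{(\ell_{\by}(f),\ell_{\by}(1),\ell_{\by}(\bp)):\by\in\mathscr{R}^k_d(\momsupport)\}$ is closed.
\item The moment problem is strictly feasible.
\end{itemize}
For strict feasibility I would look for a relative-interior pseudo-moment vector. I would take the moment vector of a measure with positive density on a small ball contained in $\dom f$, and correct it so that the constraints $\ell_{\by}(1)=1$ and $\ell_{\by}(\bp)=\bp(X)$ hold exactly. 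The room for this correction comes from $\bp(X)$ lying in the interior of the convex hull of $\bp(\dom f)$. In the boundary/degenerate situations I would instead prove closedness of the image cone. For this I would use that truncated quadratic modules with $\deg(g_i\sigma_i)\le2k$ are closed whenever $\dom f$ has nonempty interior, and that $\ell_{\by}(1)=1$ bounds the Gram-matrix variables. I expect this step to be the main obstacle, because without extra hypotheses on $\dom f$ a gap can in principle appear. I would try the closedness route first.

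\textbf{Step 2: monotonicity and upper bound.} Next I use the nesting $\mathscr{Q}^k_d\subset\mathscr{Q}^{k+1}_d\subset\mathscr{P}_d(\momsupport)$, which holds because SOS multipliers of degree $k$ are allowed at level $k+1$. Enlarging the feasible set of the maximization \eqref{sos} gives $\fd^k(X)\le\fd^{k+1}(X)\le\fd(X)$. By the Remark after Theorem~\ref{th:duality-improved}, the dual value is at most $\fpc(X)$, since every feasible $\dualv+\langle\dualu,\bp\rangle$ is a polyconvex minorant of $f$. Dually, $\mathscr{M}_d\subset\mathscr{R}^{k+1}_d\subset\mathscr{R}^k_d$ gives $\fp^k(X)\le\fp^{k+1}(X)\le\fp(X)=\fpc(X)$, where the last equality is Theorem~\ref{th:pc-measure-lp}.

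\textbf{Step 3: convergence under Assumption~\ref{ass:archimedean}.} Archimedeanity makes $\dom f$ compact. Outside $\dom f$ the function $f$ equals $+\infty$, and on $\dom f$ it is polynomial, hence continuous; therefore $f$ is lower semicontinuous and satisfies the coercivity hypothesis. Theorem~\ref{th:duality-improved} then gives $\fd(X)=\fpc(X)$. Fix $\varepsilon>0$ and take $(\dualu,\dualv)$ feasible for \eqref{dual} with $\dualv+\langle\dualu,\bp(X)\rangle\ge\fpc(X)-\varepsilon$. Then $f-(\dualv-\varepsilon)-\langle\dualu,\bp\rangle$ is strictly positive on $\dom f$. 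By Putinar's Positivstellensatz it lies in $\mathscr{Q}^k_d(\momsupport)$ for all $k$ large, so $\fd^k(X)\ge\fpc(X)-2\varepsilon$ eventually. Together with Step~2 this gives $\fd^k(X)\to\fpc(X)$. Finally, $\fp^k(X)$ is squeezed between $\fd^k(X)$ (by weak duality, or by the equality from Step~1) and $\fpc(X)$, so $\fp^k(X)\to\fpc(X)$ as well.
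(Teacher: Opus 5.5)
Apart from Step~1, your argument is the paper's proof. The weak-duality computation in your opening paragraph is the same. Monotonicity and the bound by $\fpc(X)$ come from the inclusions $\mathscr{Q}^k_d(\momsupport)\subset\mathscr{Q}^{k+1}_d(\momsupport)\subset\mathscr{P}_d(\momsupport)$ and $\mathscr{M}_d(\momsupport)\subset\mathscr{R}^{k+1}_d(\momsupport)\subset\mathscr{R}^k_d(\momsupport)$. Convergence comes from applying Putinar's Positivstellensatz to an $\varepsilon$-shifted near-optimal certificate for \eqref{dual}, then squeezing $\fp^k(X)$ between $\fd^k(X)$ and $\fpc(X)$.

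Step~1 is the gap, and it arises only because you took a misprint at face value. The sentence just before the theorem and the paper's proof both state $\fd^k(X)\le\fp^k(X)$ and call it weak duality, which is exactly what you already proved. Read literally, the first inequality would mean zero duality gap at every fixed order. The paper neither claims nor uses this, and your sketch does not establish it:
\begin{itemize}
\item Your Slater route needs a ball inside $\momsupport$ and needs $\bp(X)$ in the interior of the convex hull of $\bp(\momsupport)$. Neither is a hypothesis of the theorem.
\item Your closedness route is not carried out, and its ingredients do not suffice. Closedness of $\mathscr{Q}^k_d(\momsupport)$ does not by itself make the relevant image cone closed. The normalization $\ell_{\by}(1)=1$ bounds only one entry of the moment matrix; bounding the whole matrix needs something like a ball constraint among the $g_i$.
\end{itemize}
Without such a constraint qualification, fixed-order moment--SOS pairs can have a gap. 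For example, minimizing $x_1x_2$ over $\{x\in\R^2:\,-x_2^2\ge 0\}$ at order one gives moment value $0$, while the SOS problem is infeasible. Delete Step~1 and read the first inequality as $\fd^k(X)\le\fp^k(X)$. The rest of your proposal then proves the theorem as the paper does.
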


\begin{remark}
	The values $\fp^k(X)$ and $\fd^k(X)$ always have well-defined limits $\fp^\infty(X)$ and $\fd^\infty(X)$. Assumption~\ref{ass:archimedean} suffices to ensure that $\fd^\infty(X) = \fp^\infty(X) = \fpc(X)$, but is by no means necessary: these equalities hold also for all examples in Section~\ref{sec:numerics} where $\dom f$ is not compact.
\end{remark}

\begin{proof}
	The inequality $\fd^k(X) \leq \fp^k(X)$ states the weak duality between problems~\eqref{mom} and~\eqref{sos}. The other inequalities are true because of the inclusions $\mathscr{R}^{k+1}_d(\momsupport) \subset \mathscr{R}^k_d(\momsupport)$ and $\mathscr{Q}^k_d(\momsupport) \subset \mathscr{Q}^{k+1}_d(\momsupport)$, which follow directly from the definition of these cones.
	
	To prove convergence under Assumption~\ref{ass:archimedean}, we show that for every $\varepsilon>0$ there exists $k_\varepsilon$ such that $\fd^{k_\varepsilon}(X)\geq \fpc(X) - \varepsilon$. Consider a pair $(\dualu,\dualv) \in \R^N \times \R$ that is admissible for problem \eqref{dual} and satisfies $\dualv + \langle u,p(X)\rangle \geq \fpc(X)-\frac12\varepsilon$. Then, the pair $(\dualu', \dualv'):=(\dualu, \dualv-\frac12\varepsilon)$ satisfies $\dualv' + \langle u',p(X)\rangle \geq \fpc(X)-\varepsilon$ as well as
	\begin{equation*}
		f(Z)-\dualv' - \langle u',p(Z)\rangle \geq \frac\varepsilon2 > 0
		\quad \forall Z \in \dom f.
	\end{equation*}
	This strict inequality, together with  Assumption~\ref{ass:archimedean}, means that we can invoke Putinar's Positivstellensatz \cite[Lemma~4.1]{Putinar1993} to conclude that the polynomial $f-\dualv' - \langle u',p\rangle$ belongs to $\mathscr{Q}^{k_\varepsilon}_d(\momsupport)$ for some integer $k_\varepsilon$. This implies $\fd^{k_\varepsilon}(X) \geq \dualv' + \langle u',p(X)\rangle \geq \fpc(X)-\varepsilon$.
\end{proof}

\subsubsection{Finite convergence}\label{sec:finite-convergence}

Assumption~\ref{ass:archimedean} suffices to ensure \emph{a priori} that the optimal values $\fp^k(X)$ and $\fd^k(X)$ converge to $\fpc(X)$ for every matrix $X \in \dom f$. Here, we discuss complementary sufficient conditions to determine \emph{a posteriori} if $\fp^k(X)=\fpc(X)$ for some $k$. Importantly, these conditions for finite convergence do not require $\dom f$ to be compact.

The first natural sufficient condition for finite convergence is that $\fp^k(X)=f(X)$, in which case one also concludes that $f$ is polyconvex at $X$.

\begin{proposition}\label{match}
	If the optimal value of the moment relaxation \eqref{mom} satisfies $\fp^{k^*}(X) = f(X)$ for some integer $k^*$, then $\fp^{k}(X)=\fpc(X)=f(X)$ for all $k \geq k^*$ and $f$ is polyconvex at $X$.
\end{proposition}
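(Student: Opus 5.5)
The plan is a sandwich argument based on the monotonicity in Theorem~\ref{convergence}. That theorem gives $\fp^{k}(X)\le\fp^{k+1}(X)\le\fpc(X)$ for all admissible $k$. By definition, $\fpc$ is the pointwise supremum of polyconvex lower bounds of $f$, so $\fpc(X)\le f(X)$ trivially; this is also visible from Theorem~\ref{th:pc-measure-lp} with the feasible measure $\delta_X$. Hence for every $k\ge k^*$ we get
\begin{equation*}
f(X)=\fp^{k^*}(X)\le \fp^{k}(X)\le \fpc(X)\le f(X),
\end{equation*}
which forces all of these quantities to coincide. This settles the first claim.

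For the claim ``$f$ is polyconvex at $X$'', I interpret it as $\fpc(X)=f(X)$. This is the standard meaning: the polyconvex envelope touches $f$ at $X$, so $X$ admits no nontrivial polyconvex splitting. The claim then follows immediately from the chain above. If one wants a more concrete certificate, one can note that $\fpc(X)=f(X)$ combined with the expression \eqref{primal} means that $\delta_X$ is optimal among all measures with $\int p\,d\mu=p(X)$. In other words, $f(X)\le\int f\,d\mu$ for every such $\mu$, which is exactly Dacorogna's pointwise characterization.

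The main obstacle is a technical point: whether the monotonicity and the bound $\fp^k\le\fpc$ from Theorem~\ref{convergence} are available starting from the index $k^*$. That theorem is stated for $k\ge 2d$. The bound $\fp^k(X)\le\fpc(X)$ actually holds for every $k$, because it relies only on the inclusion $\mathscr{M}_d(\momsupport)\subset\mathscr{R}^k_d(\momsupport)$ together with the identification of \eqref{primalmoment} with \eqref{primal}. Likewise, the monotonicity in $k$ relies only on the nesting of the pseudo-moment cones. I would therefore restate these two facts directly from the cone inclusions rather than cite the index range of Theorem~\ref{convergence}.

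A second small point concerns $X\notin\dom f$, where $f(X)=+\infty$. In that case the hypothesis $\fp^{k^*}(X)=f(X)$ means the relaxation is infeasible or has value $+\infty$. The same inequalities still yield $\fpc(X)=+\infty=f(X)$, so that case needs no separate argument.
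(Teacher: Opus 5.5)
Your proposal is correct and is essentially the paper's proof. Both use the sandwich $\fp^{k^*}(X)\le\fp^{k}(X)\le\fpc(X)\le f(X)$ from Theorem~\ref{convergence}, with the hypothesis forcing equality throughout and ``polyconvex at $X$'' read as $\fpc(X)=f(X)$. Your side remarks are sound refinements that the paper's one-line argument glosses over: monotonicity and the bound $\fp^k(X)\le\fpc(X)$ come from the cone inclusions, independently of the index range $k\ge 2d$ stated in Theorem~\ref{convergence}, and the case $X\notin\dom f$ needs no separate treatment.
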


\begin{proof}
	Theorem~\ref{convergence} ensures that $\fp^{k^*}(X) \leq \fp^{k}(X) \leq \fpc(X) \leq f(X)$ for all $k \geq k^*$. If $\fp (X)=f(X)$, all inequalities must be equalities. 
\end{proof}

A second sufficient condition for the finite convergence of the moment-SOS hierarchy is as follows. Given a pseudo-moment vector $\by \in \mathscr{R}^k_d(\momsupport)$, let us define the \emph{moment matrix} of order $k$ as the symmetric matrix $M_k(\by)$ associated to the quadratic form $u \in \R[Z]_k \mapsto \ell_{\by}(u^2) \in \R$. Equivalently
\begin{equation}
	M_k(\by) := \ell_{\by}(b_{\ba_1}b_{\ba_2})_{\ba_1,\ba_2 \in \N^{mn}_k}
\end{equation}
where $\ell_{\by}(.)$ is meant to act element-wise on a matrix. Let us also define
\begin{equation*}
	d_g := \max_{i\in\{1,\ldots,s\}} \deg g_i
\end{equation*}
where $g_1,\ldots,g_s$ are the polynomials used to define $\det f$ (cf. Assumption~\ref{ass:polydata}).

\begin{proposition}[Flat extension]\label{flatextension}
	Let $\by^* \in \mathscr{R}^{k^*}_d(\momsupport)$ be an optimal solution of the moment relaxation \eqref{mom} of order $k^*$. Suppose $r:=\rank M_{k^*}(\by^*) = \rank M_{k^*-d_g}(\by^*)$. Then,
	$\fp   ^{k}(X)=\fpc(X)$ for all $k \geq k^*$ and
	$\by^*$ is the vector of moments of an atomic measure supported on matrices $X_1,\ldots,X_r \in \momsupport$.
\end{proposition}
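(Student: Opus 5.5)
The plan is to invoke the flat extension theorem of Curto and Fialkow, in the form for semialgebraic sets given by Laurent \cite[Theorem~6.18]{Laurent2009} (see also \cite{lasserre15,nie23}). It says the following. Let $\by^*$ be a truncated pseudo-moment vector whose moment matrix $M_{k^*}(\by^*)$ and localizing matrices for $g_1,\ldots,g_s$ are positive semidefinite. If the rank condition $\rank M_{k^*}(\by^*) = \rank M_{k^*-d_g}(\by^*)$ holds, then $\by^*$ (up to degree $2k^*$) admits a unique representing measure. This measure is $r$-atomic, with atoms in $\{Z:\; g_i(Z)\geq 0\}$, which is $\dom f$ by Assumption~\ref{ass:polydata}. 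Membership $\by^*\in\mathscr{R}^{k^*}_d(\momsupport)$ is exactly the positive semidefiniteness of these matrices, since $\ell_{\by}(\sigma_0)\geq 0$ for all SOS $\sigma_0$ is equivalent to $M_{k^*}(\by^*)\succeq 0$, and similarly for the localizing matrices. So the first step is to translate the definition of the pseudo-moment cone into these matrix conditions and apply the theorem. This gives atoms $X_1,\ldots,X_r\in\dom f$ and weights $\lambda_1,\ldots,\lambda_r>0$ with $y^*_{\ba} = \sum_j \lambda_j b_{\ba}(X_j)$ for all $\ba$ of degree up to $2k^*$.

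Next I check that $\mu^* = \sum_j \lambda_j\delta_{X_j}$ is feasible for \eqref{primal}. Since $2k^*\geq \max\{\deg f, m,n\}$ and the minors have degree at most $\min(m,n)$, all polynomials appearing in the constraints and objective are integrated exactly by $\mu^*$. The constraint $\ell_{\by^*}(1)=1$ gives $\sum_j\lambda_j=1$, so $\mu^*$ is a probability measure. The constraint $\ell_{\by^*}(p)=p(X)$ gives $\int p\,d\mu^* = p(X)$, and the support of $\mu^*$ lies in $\dom f$. Moreover $\int f\,d\mu^* = \ell_{\by^*}(f) = \fp^{k^*}(X)$. By Theorem~\ref{th:pc-measure-lp}, this yields $\fpc(X)\leq \int f\,d\mu^* = \fp^{k^*}(X)$.

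Finally, Theorem~\ref{convergence} gives $\fp^{k^*}(X)\leq \fp^k(X)\leq \fpc(X)$ for all $k\geq k^*$. Combined with the reverse inequality, all of these values are equal, proving $\fp^k(X)=\fpc(X)$ for $k\geq k^*$. The atomic representation of $\by^*$ was obtained in the first step.

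The main obstacle is the first step: matching the paper's definition of $\mathscr{R}^{k}_d$ with the hypotheses of the flat extension theorem. The vector $\by^*$ is indexed up to degree $d$ in the notation, but the moment matrix $M_{k^*}$ requires moments up to degree $2k^*$. I will interpret $\by^*$ as the full truncated sequence of degree $2k^*$ that the SDP actually optimizes over. I also need the localizing constraints $\ell_{\by}(g_i\sigma_i)\geq0$ for $\deg(g_i\sigma_i)\leq 2k^*$ to supply positive semidefinite localizing matrices of order at least $k^*-\lceil d_g/2\rceil$, which is enough for the rank condition as stated with $d_g$. Once this bookkeeping is settled, the remaining steps are routine.
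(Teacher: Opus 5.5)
Your proposal is correct and follows the same route as the paper: the moment and localizing matrices are positive semidefinite, so together with the rank condition the Curto--Fialkow flat extension theorem (in Laurent's form) gives an $r$-atomic representing measure on $\dom f$; this measure is feasible for \eqref{primal}, and the chain $\fpc(X)\le \ell_{\by^*}(f)=\fp^{k^*}(X)\le \fp^{k}(X)\le\fpc(X)$ closes the argument. Your bookkeeping is more careful than the paper's: you read $\by^*$ as a degree-$2k^*$ sequence, and since $\rank M_j(\by^*)$ is nondecreasing in $j$, the stated condition with $d_g$ implies the standard one with $\lceil d_g/2\rceil$; the one implicit requirement, shared with the paper, is $d_g\ge 1$ (e.g.\ when $s=0$), because the flatness condition must involve at least $\rank M_{k^*}(\by^*)=\rank M_{k^*-1}(\by^*)$.
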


\begin{proof}
	The moment matrix $M_{k^*}(\by^*)$ is positive semidefinite because $\by^* \in \mathscr{R}^{k^*}_d(\momsupport)$. Since $r:=\rank M_{k^*}(\by^*) = \rank M_{k^*-d_g}(\by^*)$ by assumption, we can then apply \cite[Thm.~5.33]{Laurent2009} to conclude that $\by^*$ is the vector of moments of an atomic measure supported on $\dom f$. Specifically, there exist matrices $X_1,\ldots,X_r \in \momsupport$ and positive weights $w_1,\ldots,w_r > 0$ with $\sum_{i=1}^r w_i = 1$ such that $\by^*  = \sum_{i=1}^s w_i \bb(X_i)$ where $\bb:=(b_{\ba})_{\ba \in \N^{mn}_{2k^*}}$. Now, using the linearity of the functional $\by\mapsto \ell_{\by}(p)$ we conclude that the atomic measure $\mu = \sum_{i=1}^r w_i \delta_{X_i}$ satisfies
	\begin{equation*}
		\int p \,d\mu = \sum_{i=1}^r w_i p(X_i) 
		= \sum_{i=1}^r w_i\ \ell_{\bb(X_i)}(p)
		=\ell_{\by^*}(\bp) = p(X),
	\end{equation*}
	so it is feasible for problem \eqref{primal}. Then, since the map $\by\mapsto \ell_{\by}(f)$ is also linear,
	\begin{equation*}
		\fpc(X) \leq 
		\sum_{i=1}^r w_i\ f(X_i) =
		\sum_{i=1}^r w_i\ \ell_{\bb(X_i)}(f) = \ell_{\by^*}(\bp) = \fp^{k^*}(X) \leq \fpc(X).
	\end{equation*}
	All inequalities must evidently be equalities. We also conclude that $\fp^k(X) \leq \fpc(X)$ for all $k \leq k^*$ because $\fp^{k^*}(X) \leq \fp^k(X) \leq \fpc(X)$.
\end{proof}

Finally, we show that our semidefinite programming approach is exact for sufficiently large relaxation order $k$ if $f$ is a lifted SOS polyconvex polynomial (cf. Definition~\ref{def:lifted-sos-pc}). Of course, since lifted SOS polyconvexity can be verified directly by solving a semidefinite program, there is no need to polyconvexify $f$ in this case. The result, however, remains of theoretical interest.

\begin{proposition}\label{prop:sospolyconvex}
	Let $f:\R^{m\times n} \to \R$ be a lifted SOS polyconvex polynomial. Let $d_g$ be the smallest positive even integer for which there exists an SOS-convex polynomial $g$ such that $f=g\circ p$. Then, $\fp^k(X) = f(X)$ for all integers $k$ such that $2k \geq \max\left\{\deg f, d_g + \min\{m,n\} \right\}$.
\end{proposition}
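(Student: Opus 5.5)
Since $f$ is a polynomial, $\dom f=\R^{m\times n}$; we take the semialgebraic description with $s=0$, so $\mathscr{R}^k_d$ reduces to the pseudo-moment vectors with positive semidefinite moment matrix, and $\mathscr{Q}^k_d$ to SOS polynomials of degree at most $2k$. By Theorem~\ref{convergence} and the definition of $\fpc$, $\fp^k(X)\le\fpc(X)\le f(X)$. It therefore suffices to prove $\fp^k(X)\ge f(X)$. The plan is to work on the primal side and prove a pseudo-moment version of Jensen's inequality. For every $\by\in\mathscr{R}^k_d(\momsupport)$ with $\ell_{\by}(1)=1$ and $\ell_{\by}(\bp)=\bp(X)$, we aim to show
\[
\ell_{\by}(f)=\ell_{\by}(g\circ p)\ \ge\ g\bigl(\ell_{\by}(p)\bigr)=g(p(X))=f(X).
\]

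The key identity is the one from the proof that lifted SOS polyconvexity implies SOS polyconvexity, but with the roles of the variables fixed. Set $a:=p(X)\in\R^N$, a constant vector. By Proposition~\ref{prop:sos-convexity}, the Bregman divergence $D_g(w,a)=g(w)-g(a)-\ip{\nabla g(a)}{w-a}$ is SOS in $(w,a)$. Specializing $a$ to a constant keeps it SOS in $w$, with degree at most $d_g$, so it is a sum of squares of polynomials of degree at most $d_g/2$ in $w$. Substituting $w=p(Z)$ gives
\[
f(Z)-f(X)-\ip{\nabla g(p(X))}{p(Z)-p(X)}=\sum_j h_j(p(Z))^2 .
\]
Each $h_j\circ p$ has degree at most $\tfrac{d_g}{2}\min\{m,n\}$ in $Z$, since the minors have degree at most $\min\{m,n\}$.

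Apply $\ell_{\by}$ to this identity and use linearity together with $\ell_{\by}(1)=1$ and $\ell_{\by}(p)=p(X)$; the linear term then vanishes. What remains is $\ell_{\by}(f)-f(X)=\sum_j\ell_{\by}\bigl((h_j\circ p)^2\bigr)$. Each term on the right is nonnegative provided $\deg(h_j\circ p)\le k$, because membership in $\mathscr{R}^k_d$ gives $\ell_{\by}(\sigma)\ge0$ for every $\sigma\in\Sigma_k[Z]$. Hence $\fp^k(X)\ge f(X)$, which combined with the first paragraph gives equality. Alternatively, the same identity can be read on the dual side: taking $\dualu=\nabla g(p(X))$ and $\dualv=f(X)-\ip{\dualu}{p(X)}$ makes the pair feasible in \eqref{sos}, which yields $\fd^k(X)\ge f(X)$ and hence the same conclusion through weak duality.

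The main obstacle is the degree bookkeeping. The stated threshold $2k\ge d_g+\min\{m,n\}$ is smaller than the crude bound $2k\ge d_g\min\{m,n\}$ obtained above. To reach it, one must exploit the structure of the SOS decomposition of $D_g(\cdot,a)$. A Bregman divergence vanishes to second order at $w=a$, so one can write it through the integral form of the Hessian, $\int_0^1(1-t)\ip{w-a}{\nabla^2 g(a+t(w-a))(w-a)}\,dt$. The hope is that the squares can then be chosen as products of a factor linear in $p(Z)$ with coefficients of degree at most $(d_g-2)/2$ in $p$, or, better, with the variables regrouped so that only one minor factor of degree $\min\{m,n\}$ appears per square. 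I would first try to derive this degree bound directly from the SOS-convexity definition, which states that $\ip{v}{\nabla^2 g(w)v}$ is SOS in $(w,v)$. I would also check the authors' intended convention, namely whether the relevant degree is taken in $Z$ or in $p$. If that refinement fails, the argument above still proves the result with the weaker threshold $2k\ge\max\{\deg f, d_g\min\{m,n\}\}$.
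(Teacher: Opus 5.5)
\textbf{Same route as the paper.} Your argument is the paper's proof with the lifting step removed. The paper passes to $\bz=\ell_{\by}(\zeta\circ\bp)$ and applies Lasserre's pseudo-moment Jensen inequality $\ell_{\bz}(g)\ge g(\ell_{\bz}(\Id))$. That inequality is proved exactly as you proceed: apply the functional to the SOS certificate of $D_g(\cdot,a)$ at $a=\bp(X)$.

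\textbf{The gap you flag is real and you leave it open.} What you actually prove is the result for $2k\ge\max\{\deg f,\,d_g\min\{m,n\}\}$, which asks for more than the stated hypothesis. For example, $f(X)=X_{11}^4+(\det X)^2$ on $\R^{2\times2}$ has $g(q)=q_1^4+q_5^2$, $d_g=4$ and $\deg f=4$. The statement then covers $k=3$, but your bound needs $k\ge 4$. Your integral-Hessian idea does not help by itself: factors of degree $(d_g-2)/2$ in $\bp$ can still have $Z$-degree $\frac{d_g-2}{2}\min\{m,n\}$, which gives the same bound. The paper's proof has the same defect. It asserts $\sigma\circ\bp\in\Sigma_k[Z]$ for every SOS $\sigma$ of degree $d_g$ in the minors, and that also requires $2k\ge d_g\min\{m,n\}$.

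\textbf{The missing idea is weighted-degree bookkeeping.} Give each minor variable $w_i$ a weight $s_i$ equal to its size, setting the constant minor, if present, equal to $1$; this preserves SOS-convexity. Let $\operatorname{wdeg}$ be the induced weighted degree, so that $\deg(h\circ\bp)\le\operatorname{wdeg}h$. Three facts then close the gap.

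(i) $\operatorname{wdeg}g=\deg f$. Put $D:=\operatorname{wdeg}g$. The top weighted-homogeneous part $g_{[D]}(w)=\lim_{t\to\infty}t^{-D}g(t^{s_1}w_1,\ldots,t^{s_N}w_N)$ is convex, being a pointwise limit of convex functions, and $g_{[D]}\circ\bp$ is the degree-$D$ homogeneous part of $f$. Suppose this part vanished. Then $g_{[D]}$ would be $\le 0$ on the closed convex hull of $\bp(\R^{m\times n})$. That hull is the whole space: an affine function $c+\ip{u}{\bp}$ with $u\neq0$ cannot be bounded below, because it is affine along rank-one lines, so boundedness would force it to be constant, contradicting the linear independence of the minors. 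Hence $g_{[D]}$ would be convex and bounded above, so constant, so $0$, a contradiction.

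(ii) $\operatorname{wdeg}D_g(\cdot,a)\le D$, because the affine correction only involves variables on which $g$ depends.

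(iii) Suppose $D_g(\cdot,a)=\sum_j h_j^2$ and let $M=\max_j\operatorname{wdeg}h_j$. The weighted-degree-$2M$ part of the right-hand side is a nonzero sum of squares, so $M\le D/2$.

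Consequently $\deg(h_j\circ\bp)\le \deg f/2$. Your argument then gives $\fp^k(X)=f(X)$ whenever $2k\ge\deg f$, which covers the stated range.
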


\begin{proof}
	Fix $X \in \R^{m\times n}$.
	Since $f$ is lifted SOS polyconvex, there exists an SOS-convex polynomial $g$ such that
	$f=g \circ \bp$. We choose $g$ to have the minimum degree, which is no larger than $d_g\geq 1$.
	
	Let $l = d_g/2$. Let $\zeta = \smash{(\zeta_a)_{a \in \N^{N}_{2l}}}$ be a basis for the space of $N$-variate polynomials of degree $2l$. For every vector $\smash{\bz = (z_a)_{a \in \N^{N}_{2l}}}$, we define a linear operator on this polynomial space by $\ell_{\bz}(\zeta) = \bz$.
	
	For every integer $k$ chosen as in the statement of the proposition, the moment relaxation \eqref{mom} satisfies
	\begin{equation*}
		\fp^k(X) 
		=
		\inf_{\substack{
				\by \in \mathscr{R}^k_d(\R^{m \times n})\\
				\bz \in \mathscr{R}^{l}_{d_g}(\R^{m \times n})
		}} \ell_{\by}(f) 
		\quad\text{s.t.}\quad
		\begin{cases}
			\ell_{\by}(1) = 1, \; \ell_{\by}(\bp) = \bp(X),\\ 
			\ell_{\bz}(1) = 1, \; \ell_{\by}(\zeta \circ \bp) = \ell_{\bz}(\zeta).
		\end{cases}
	\end{equation*}
	This is because, for every admissible $\by \in \mathscr{R}^k_d(\R^{m \times n})$, the vector $z=\ell_{\by}(\zeta\circ \bp)$ is admissible: the equality $\ell_{\by}(\zeta \circ \bp) = \ell_{\bz}(\zeta)$ holds by definition of $\ell_{\bz}$, and $\bz \in \mathscr{R}^{k'}_{d_g}(\R^{m \times n})$ because the constraint $\ell_{\by}(\zeta \circ \bp) = \ell_{\bz}(\zeta)$  implies that $\ell_z(\sigma)=\ell_y(\sigma\circ p)$ for all polynomials $\sigma \in \R[q]_{2l}$, so in particular
	$\ell_{\bz}(\sigma) = \ell_{\by}(\sigma \circ p) \geq 0$ for all degree-$2l$ SOS polynomials $\sigma$. The inequality $\ell_{\by}(\sigma \circ p) \geq 0$ holds by definition of $\mathscr{R}^k_d(\R^{m \times n})$ since the polynomial $Z\mapsto \sigma(\bp(Z))$ is a sum of squares of degree $2k$.
	
	Next, we use the identities $\ell_{\by}(f) = \ell_{\by}(g \circ \bp) = \ell_{\bz}(g)$ and $\ell_{\by}(p) = \ell_{\by}(\Id \circ p) = \ell_{\bz}(\Id)$ to rewrite 
	\begin{equation}\label{e:pwlb}
		\fp^k(X) 
		=
		\inf_{\substack{
				\by \in \mathscr{R}^k_d(\R^{m \times n})\\
				\bz \in \mathscr{R}^{l}_{d_g}(\R^{m \times n})
		}} \ell_{\bz}(g) 
		\quad\text{s.t.}\quad
		\begin{cases}
			\ell_{\bz}(1) = 1, \; \ell_{\bz}(\Id) = \bp(X)\\ 
			\ell_{\by}(1) = 1, \; \ell_{\by}(\zeta \circ \bp) = \ell_{\bz}(\zeta).
		\end{cases}
	\end{equation}
	Now, since $g$ is SOS-convex we can use an extension of Jensen's inequality proved in \cite[Thm.~2.6]{lasserre09} (see also \cite[Sec.~7.1.2]{nie23}) to conclude that $\ell_{\bz}(g) \geq g(\ell_{\bz}(\Id))$ for all pseudo-moment vectors $\bz$. Minimizing both sides of this inequality over $z$ that are admissible for the right-hand side of \eqref{e:pwlb} reveals that $\fp^k(X) \geq g(p(X))$. But $\fp^k(X) \leq f(X) = g(p(X))$ by Theorem~\ref{convergence}, so $\fp^k(X)= g(p(X))=f(X)$.
\end{proof}
 
\subsection{Numerical examples}\label{sec:numerics}

We now illustrate our semidefinite programming approach to computing polyconvex envelopes on a range of examples.

\subsubsection{A basic polyconvex but not convex example}

For example if $m=n=2$, letting
\begin{equation}\label{eq:x}
X=\left(\begin{array}{cc}x_1&x_3\\x_2&x_4\end{array}\right)
\end{equation}
we define the moment of $\mu$ of order $\ba \in \N^4$ as follows:
\[
y^{\ba}:=\int X^{\ba} d\mu(X)=\int {x_1}^{a_1} {x_2}^{a_2} {x_3}^{a_3} {x_4}^{a_4} d\mu(X).
\]
Let $f(X):=(\det\,X)^2$, i.e.
\[
f(X) = \sum_{\ba \in \N^4_4} f_{\ba} \bx^{\ba} = x^2_1x^2_4-2x_1x_2x_3x_4+x^2_2x^2_3
\]
and
\[
\ell_{\by}(f) = y_{2002}-2y_{1111}+y_{0220}.
\]
Linear problem \eqref{primalmoment} then reads
\[
\begin{array}{rcl}
\fp(X) & := & \inf_{\by \in \mathscr{M}_4(\R^4)} y_{2002}-2y_{1111}+y_{0220} \\& \mathrm{s.t.} & (y_{0000},\:y_{1000},\:y_{0100},\:y_{0010},\:y_{0001},\:y_{1001}-y_{0110}) = (1,x_1,x_2,x_3,x_4,x_1x_4-x_2x_3)
\end{array}
\]
where the minimization is in $\mathscr{M}_4(\R^4)$, the cone of moments of 4 variables of degree up to 4, which has dimension $70$.

The moment relaxation \eqref{mom} is constructed with the following Matlab script which uses the GloptiPoly interface: 
\begin{verbatim}
mpol Z 2 2
k = 2; % relaxation order 
f = det(Z)^2; % function
X = randn(2); % polyconvex envelope at X
P = msdp(min(f), mom([1;Z(:);det(Z)])==[1;X(:);det(X)], k);
[stat,obj] = msol(P);
disp(['lower bound (relaxation) = ' num2str(obj)])
disp(['upper bound (Dirac at X) = ' num2str(det(X)^2)]) 
\end{verbatim}
It is then solved by any semidefinite optimization solver, e.g. MOSEK \cite{MOSEK}.

We observe numerically that the first moment relaxation  (order $k=2$) always converges: it returns a lower bound $\fp^2(X)$ which is equal to the upper bound $f(X)$ and hence equal to $\fp(X)$ for all $X \in \R ^{2\times 2}$, illustrating Proposition \ref{match}.
Note also that $f$ is lifted SOS polyconvex according to Definition \ref{def:lifted-sos-pc} with the convex function $q \in \R^5 \mapsto g(q)=q^2_5$, so that Proposition  \ref{prop:sospolyconvex} ensures that the first relaxation is exact.

The restriction of the polyconvex function $X \in \R^{2\times 2} \mapsto f(X)=(\det X)^2$ to the affine subspace
\[
X(x) := \left(\begin{array}{cc}
x & 1 \\ 1 & x \end{array}\right)
\]
parametrized by $x \in \R$ is the nonconvex double well function $x \in \R \mapsto f(X(x)) = (x^2-1)^2$.
The function
\[
\begin{array}{rcl}
x & \mapsto & \inf_{\by \in \mathscr{R}^2_4(\mathscr{K})} y_{2002}-2y_{1111}+y_{0220} \\& \mathrm{s.t.} & (y_{0000},\:y_{1000},\:y_{0100},\:y_{0010},\:y_{0001}) = (1,x,1,1,x)
\end{array}
\]
is the value of the first moment relaxation \eqref{mom} (order $k=2$) where we remove the non-linear entry $\bp$ in the moment problem. This corresponds to the convex envelope of $f$. Numerically, this function is equal to zero everywhere. The function 
\[
\begin{array}{rcl}
x & \mapsto & \inf_{\by \in \mathscr{R}^2_4(\mathscr{K})} y_{2002}-2y_{1111}+y_{0220} \\& \mathrm{s.t.} & (y_{0000},\:y_{1000},\:y_{0100},\:y_{0010},\:y_{0001},\:y_{1001}-y_{0110}) = (1,x,1,1,x,x^2-1)
\end{array}
\]
is the value of the first moment relaxation \eqref{mom} (order $k=2$) for $\bp$ equal to the minor map. Numerically, this function coincides with the double well function, illustrating the key role played by the additional moment constraint $y_{1001}-y_{0110} = x^2-1$.

\subsubsection{A polyconvex double well}

Let $f(X):=|X-I|^2|X+I|^2$ for $X \in \R^{2\times 2}$. 
Solving \eqref{primalmoment} at relaxation order 2 
for $X=0$ returns a rank-one certificate of polyconvexity corresponding to the Dirac measure at $0$, and a value of $f(0)=\fpc(0)=4$. The dual problem \eqref{dual} has solution $r_0=4$, $\br=(0,0,0,0,-8)$ corresponding to the largest polyconvex function $4-8 \det X$ touching $f$ from below at $X=0$.
Letting
$$X=\left(\begin{array}{cc}x_1&x_3\\x_2&x_4\end{array}\right)$$
a few algebraic manipulations provide the SOS optimality certificate
$$f(X) - 4 + 8\,\det X
= (x^2_1 + x^2_2 + x^2_3 + x^2_4)^2
	+ 4(x_2-x_3)^2$$
proving that indeed $f(X) \geq 4-8\det X$ for all $X \in \R^{2\times 2}$. In addition, from the expression
$$
f(X)
= 4 + 4(x_2-x_3)^2 + (x^2_1 + x^2_2 + x^2_3 + x^2_4)^2 - 8\,\det X
$$
we deduce that $f$ is a convex function of $X$ and $\det X$, so it is polyconvex. Note that this representation of $f$ is precisely \cref{e:dw-sos-convex-repr} for $n=2$. We refer to \cite{hk25} for further discussion of the polyconvexity of double-well functions.

\subsubsection{A non-polyconvex double well}

Fix $\epsilon > 0$, let
\[
X_1 = \left(\begin{array}{cc}
1 & \epsilon \\ 0 & 1
\end{array}\right) \quad\text{and}\quad
X_2 = \left(\begin{array}{cc}
1 & -\epsilon \\ 0 & 1
\end{array}\right)
\]
and consider the double-well function $f(X):=|X-X_1|^2|X-X_2|^2$.

Function $f$ is not polyconvex because $f(\id)=\epsilon^4$, whereas choosing $\mu = \frac{1}{2}(\delta_{X_1}+\delta_{X_2})$ in the minimization problem \cref{primal} for the polyconvex envelope shows that $\fp(I)\leq 0$.

This choice of $\mu$ is in fact optimal, meaning that $\fp(I)= 0$. This is certified numerically by solving the relaxation \cref{primalmoment} with relaxation order $k=2$ and verifying that the optimal solution satisfies the flat extension conditions in Proposition \ref{flatextension}.

\subsubsection{A non-polyconvex quadruple well}\label{sec:quadwell}

Let
\[
X_1 = \left(\begin{array}{cc}
1 & 0 \\ 0 & 1
\end{array}\right), \quad
X_2 = \left(\begin{array}{cc}
-1 & 0 \\ 0 & 1
\end{array}\right), \quad
X_3 = \left(\begin{array}{cc}
1 & 0 \\ 0 & -1
\end{array}\right), \quad
X_4 = \left(\begin{array}{cc}
-1 & 0 \\ 0 & -1
\end{array}\right)
\]
and $f(X):=|X-X_1|^2|X-X_2|^2|X-X_3|^2|X-X_4|^2$. For the choice
\[
X(x) := \left(\begin{array}{cc}
0 & 0 \\x & 2x \end{array}\right)
\]
we solve the moment relaxation of order $k=4$ for different values of the parameter $x \in [0,1]$. Global optimality of the relaxation is always certified by flat extension as in Prop. \ref{flatextension}.

\begin{figure}[t]
\centering
\includegraphics[width=\linewidth]{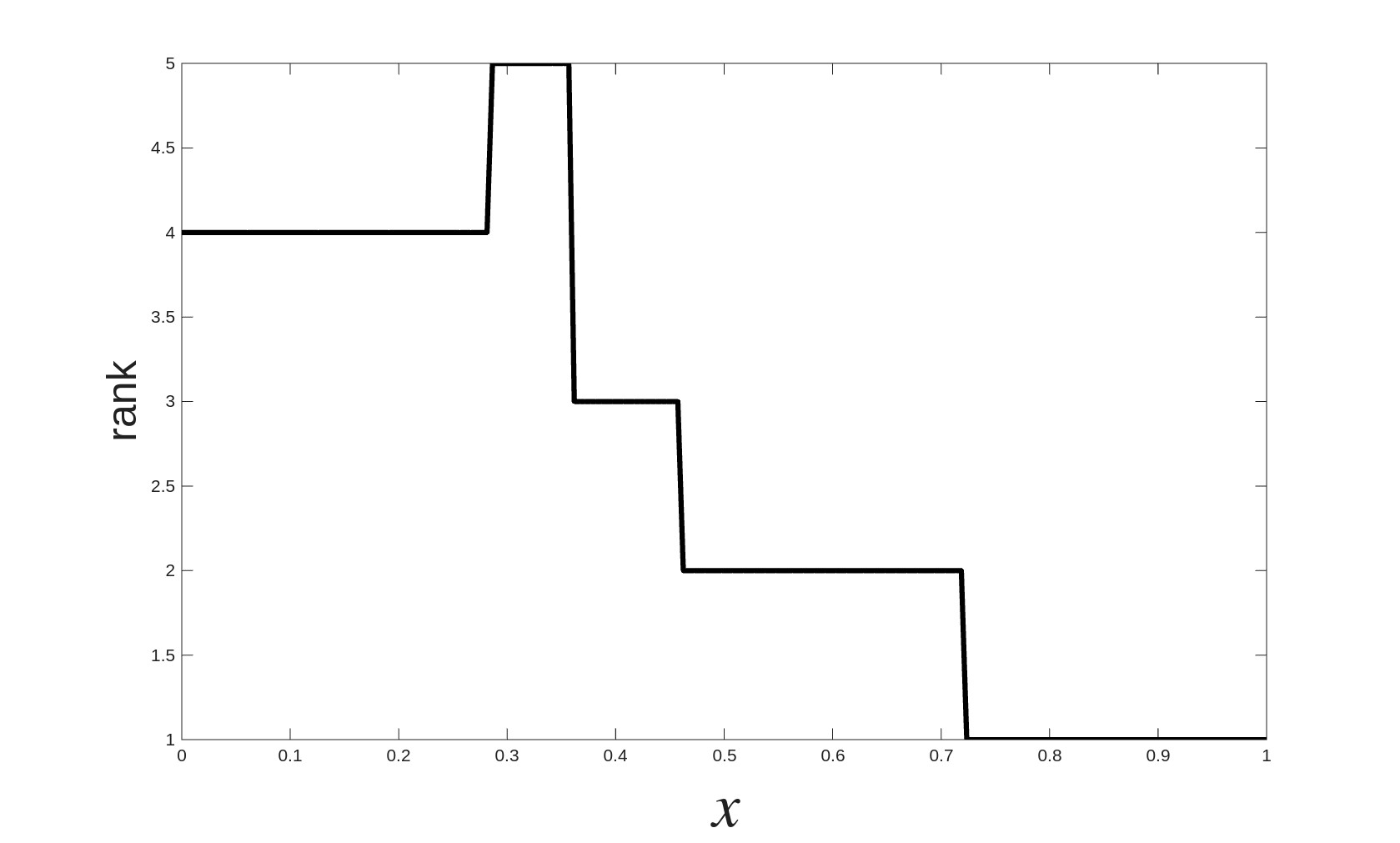}
\caption{Rank of the moment matrix for the example in Section~\ref{sec:quadwell} as a function of the parameter $x$.}
\label{fourmatrices}
\end{figure}

Figure~\ref{fourmatrices} illustrates the rank of the moment matrix as a function of $x$. When the rank is greater than one, the function $f$ is not polyconvex at $X$ and the moment relaxation returns the value of the polyconvex envelope $\fpc(X(x))$. For example if $x=0$ the 4-atomic measure solving linear problem \eqref{th:pc-measure-lp} is
\[
\mu = \frac{1}{4}(\delta_{X_1}+\delta_{X_2}+\delta_{X_3}+\delta_{X_4}).
\]
It holds
\[
f(0) = |X_1|^2|X_2|^2|X_3|^2|X_4|^2 = 16
\]
while
\[
\fpc(0) = \int f(Z)d\mu(Z) = \frac{1}{4}(f(X_1)+f(X_2)+f(X_3)+f(X_4)) = 0
\]
and
\[
\int p(Z) d\mu(Z) = p(0) = \left(0,0,0,0,0\right)
\]
with $p(X)=\left(x_1,x_2,x_3,x_4,x_1x_4-x_2x_3\right)$ and $X$ as in \eqref{eq:x}.

\subsubsection{Computational performance on a double-well function}\label{sec:twowell3d}

Let us finally illustrate the efficiency of our approach on the non-convex double-well function $X \in \R^{n\times n} \mapsto f(X):=(|X|^2-1)^2$. This is an established benchmark for analytical and computational semiconvexification, see \cite[Sec. 4.2]{mppw24}.
Its quasiconvex, polyconvex and convex envelopes coincide with the function $\max(|X|^2-1,0)^2$, so that the computational results can be compared with the analytic solution.
In \cite{mppw24}, isotropy of the function is exploited to reduce the computational burden. In our experiments, we do not perform any reduction, so the reported computation times are representative of the computational performance for any other function with the same number of variables, degree, and relaxation order $k$.

\begin{figure}[t]
\centering
\includegraphics[width=\linewidth]{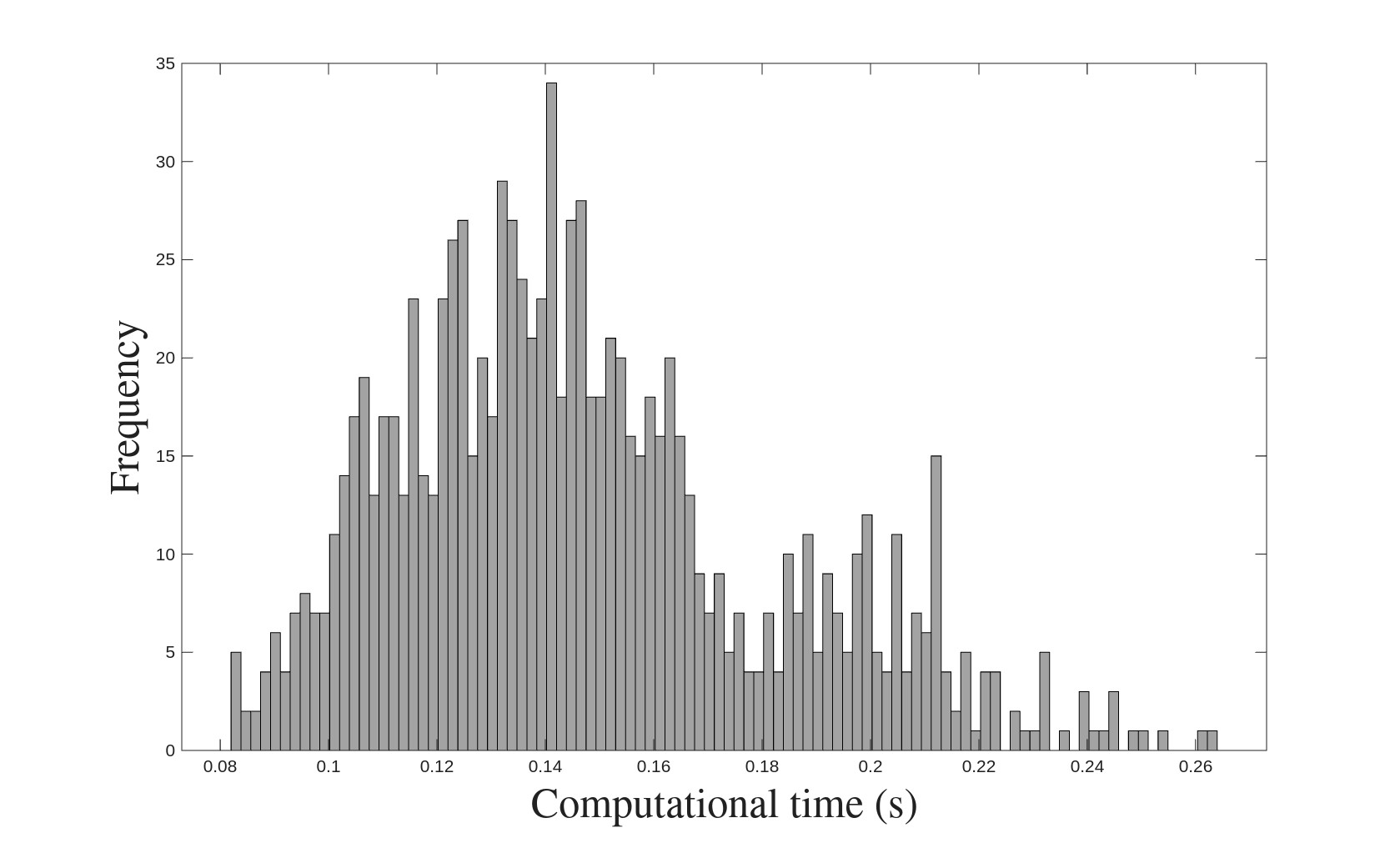}
\caption{Histogram of computational times (in seconds, on a standard laptop) for computing the polyconvex envelope of the quartic function in Section~\ref{sec:twowell3d} at 1000 randomly generated matrices. \label{fig:timingsdoublewell3d}}
\end{figure}

For $n=3$ the linear moment problem \eqref{primalmoment} has $20$ linear constraints and $715$ variables. The first moment relaxation \eqref{mom} (order $k=2$) has a positive semidefinite moment matrix constraint of size $55$.
Figure~\ref{fig:timingsdoublewell3d} shows a histogram of computational times for 1000 randomly generated points $X$, using the semidefinite solver MOSEK \cite{MOSEK} on a laptop equipped with Intel Core i7-1165G7 @ 2.80GHz processor and 16GB of RAM. For all the points, the value of the relaxation is equal to machine precision to the value of the polyconvex envelope. The mean computational time is $0.15$s and the standard deviation is $0.035$s.

\section{Conclusion}

In this work, we have exploited techniques for polynomial optimization to introduce a new perspective on polyconvexity for functions that are polynomial on their domain. Specifically, we have used SOS polynomials and moment methods to (i)~introduce new computationally tractable sufficient conditions for polyconvexity and
(ii)~efficiently evaluate polyconvex envelopes for non-polyconvex polynomials.
Our methods are easily implemented using mature software for polynomial optimization and semidefinite programming, allowing for a systematic investigation of polyconvexity with fast computations. In fact, the good performance observed in our computational examples may be improved further by exploiting algebraic structures such as sparsity \cite{lasserre2006,wang-tssos,wang-chordaltssos,wang-cstssos,yzgf-review} or symmetries \cite{Gatermann2004,Riener2013}. In particular, it should be possible to exploit invariance under the $O(n)$ and $SO(n)$ symmetry groups typical of isotropic functions.

Beyond improving computational efficiency, there are various open problems that deserve further attention. One is to better understand the expected gap between polyconvexity and SOS polyconvexity. Since there exist nonnegative polynomials that are not SOS and convex polynomials that are not SOS convex, we expect there should be polyconvex polynomials that are not SOS polyconvex in the sense of Definition~\ref{def:first-order-sos-pc}. However, we currently have no examples.

Another interesting problem is to determine convergence rates for our moment-SOS hierarchy for computing polyconvex envelopes. Since the framework we presented is a particular example of a broader class of moment problems, the general convergence rates derived in \cite{STL2026,GM2025} apply, but are likely suboptimal because they do not take into account the particular structure of polyconvexification problems. We wonder if this structure can be exploited to obtain better theoretical convergence rates, that more closely match the excellent performance observed in practice.

Finally, another interesting direction for future work would be to embed our computational approach to polyconvexification into finite-element algorithms for multi-scale nonlinear elasticity problems exhibiting microstructure. A solution strategy might be seen as a von Stackelberg game \cite{Stackelberg} where the elastic deformation is a leader and material microstructures encoded in measures representing the polyconvex envelope are followers. Our computational methods could be used to solve the subproblem for these followers, with potential to accelerate multiscale simulations in structural mechanics.

\subsection*{Acknowledgments}

This work was partly funded by the European Union/M\v SMT \v CR under the ROBOPROX project
(reg. no. CZ.02.01.01/00/22 008/0004590). It was also partly funded by the ANR-DFG project MONET (ANR-25-CE48-6598-01).

The authors acknowledge the use of AI for assistance with brainstorming, mathematical development, coding and
drafting. The final content, analysis and conclusions remain the sole responsibility of the authors.


\end{document}